\newcommand{\rmd}{\mathrm{d}}       
\newcommand{\Lop}{\mathrm{L}}       
\renewcommand{\det}{\mathrm{det}}   
\newcommand{\abs}[1]{\left\lvert#1\right\rvert}     
\newcommand{\absf}[1]{\lvert#1\rvert}     		   
\newcommand{\norm}[1]{\left\lVert#1\right\rVert}    
\newtheorem{theorem}{Theorem}
\newtheorem{lemma}{Lemma}
\newtheorem{proposition}{Proposition}
\theoremstyle{definition}
\newtheorem{definition}{Definition}
\newtheorem{property}{Property}
\begin{document}

\title{Operator-Like Wavelet Bases of $L_2(\mathbb{R}^d)$ \thanks{This research was funded in part by ERC Grant ERC-2010-AdG 267439-FUN-SP and by the Swiss National Science Foundation under Grant 200020-121763.}}

\author{Ildar Khalidov}
\author{Michael Unser}
\author{John Paul Ward}

\affil{ \small Biomedical Imaging Group, Ecole polytechnique f\'ed\'erale de Lausanne (EPFL), CH-1015, Lausanne, Switzerland}

\date{}

\maketitle

\begin{abstract}
The connection between derivative operators and wavelets is well known.  Here we generalize the concept by constructing multiresolution approximations and wavelet basis functions that act like Fourier multiplier operators.  This construction follows from a stochastic model: signals are tempered distributions such that the application of a whitening (differential) operator results in a realization of a sparse white noise.  Using wavelets constructed from these operators, the sparsity of the white noise can be inherited by the wavelet coefficients.  In this paper, we specify such wavelets in full generality and determine their properties in terms of the underlying operator. 

\vspace*{0.25cm}
\noindent
{\bf Keywords:} Fourier multiplier operators, Wavelets, Multiresolution, Stochastic differential equations

\vspace*{0.25cm}
\noindent
{\bf MSC classes:} 42C40 \and 42B15 \and 60H15
\end{abstract}

\section{Introduction}\label{sec:intro}

In the past few decades, a variety of wavelets that provide a complete and stable multiscale representation of $L_{2}(\mathbb{R}^d)$ have been developed. The wavelet decomposition is very efficient from a computational point of view, due to the fast filtering algorithm. A fundamental property of traditional wavelet basis functions is that they behave like multiscale derivatives \cite{mallat09,meyer92}.
Our purpose in this paper is to extend this concept by constructing wavelets that behave like a given Fourier multiplier operator $\Lop$, which can be more general than a pure derivative. In our approach, the multiresolution spaces are characterized by generalized B-splines associated with the operator, and we show that, in a certain sense, the wavelet inherits properties of the operator.  Importantly, the operator-like wavelet can be constructed directly from the operator, bypassing the scaling function space.  What makes the approach even more attractive is that, at each scale, the wavelet space is generated by the shifts of a single function. Our work provides a generalization of some known constructions including: cardinal spline wavelets \cite{chui91}, elliptic wavelets \cite{micchelli91}, polyharmonic spline wavelets \cite{vandeville05,vandeville10}, Wirtinger-Laplace operator-like wavelets \cite{vandeville08}, and exponential-spline wavelets \cite{khalidov06}.

In applications, it has been observed that many signals are well represented by a relatively small number of wavelet coefficients.  Interestingly, the model that motivates our wavelet construction explains the origin of this sparsity. The context is that of sparse stochastic processes, which are defined by a stochastic differential equation driven by a (non-Gaussian) white noise.   
Explicitly, the model states that $\Lop s=w$ where the signal $s$ is a tempered distribution, $\Lop$ is a shift-invariant Fourier multiplier operator, and $w$ is a sparse white noise \cite{unser11sm}. The wavelets we construct are designed to act like the operator $\Lop$ so that the wavelet coefficients are determined by a generalized B-spline analysis of $w$.  In particular, we define an interpolating spline $\phi$, corresponding to $\Lop^*\Lop$, from which we derive the wavelets $\psi=\Lop^*\phi$.  Then the wavelet coefficients are formally computed by the $L_2$ inner product
\begin{equation*}
\langle s,\psi \rangle 
= 
\langle s,\Lop^* \phi \rangle  
= 
\langle  \Lop s, \phi \rangle  
= 
\langle  w,\phi \rangle .
\end{equation*}
Sparsity of $w$ combined with localization of the interpolating spline $\phi$ results in sparse wavelet coefficients \cite{unser_part1}.  This model is relevant in medical imaging applications, where good performance has been observed in approximating functional magnetic resonance imaging and positron emission tomography data using operator-like wavelets that are tuned to the hemodynamic or pharmacokinetic response of the system \cite{khalidov11,verhaeghe08}.

Our construction falls under the general setting of pre-wavelets, which are comprehensively covered by de Boor, DeVore, and Ron in \cite{deboor93}.   Two distinguishing properties of our approach are its operator-based nature and the fact that it is non-stationary. Related constructions have been developed for wavelets based on radial basis functions \cite{chui96aw,chui96wa,stoeckler93}.   In fact, \cite{chui96wa} also takes an operator approach; however, the authors were focused on wavelets defined on arbitrarily spaced points.

This paper is organized as follows. In Section \ref{sec:prelim}, we formally define the class of admissible operators and the lattices on which our wavelets are defined. In Section \ref{sec:multires}, we construct the non-stationary multiresolution analysis (MRA) that corresponds to a given operator $\Lop$ and derive approximation rates for functions lying in Sobolev-type spaces. Then, in Section \ref{sec:wave}, we introduce the operator-like wavelets and study their properties; in particular, we derive conditions on $\Lop$ that guarantee that our choice of wavelet yields a stable basis at each scale.  Under an additional constraint on $\Lop$, we use this result to define Riesz bases of $L_2(\mathbb{R}^d)$. In Section \ref{sec:decor}, we prove a decorrelation property for families of related wavelets. In Section \ref{sec:conclusion}, we present connections to prior constructions, and we conclude with some examples of operator-like wavelets.

\section{Preliminaries}\label{sec:prelim}

The primary objects of study in this paper are Fourier multiplier operators and their derived wavelets.  The operators that we consider are shift-invariant operators $\Lop$ that act on $L_2(\mathbb{R}^d)$, the class of square integrable functions $f:\mathbb{R}^d \to \mathbb{C}$.  
The action of such a Fourier multiplier operator is defined by its symbol $\widehat{L}$ in the Fourier domain, with
\begin{equation*}
\Lop f
=
\left( \widehat{L} \widehat{f} \right)^\vee.    
\end{equation*}
The symbol $\widehat{L}$ is assumed to be a measurable function.  The adjoint of $\Lop$ is denoted as $\Lop^*$, and its symbol is the complex conjugate of $\widehat{L}$; i.e., the symbol of $\Lop^*$ is $\widehat{L}^*$. In the previous equation, we used $\widehat{f}$ to denote the Fourier transform of $f$
\begin{equation*}
\widehat{f}(\bm{\omega})
=
\int_{\mathbb{R}^d} f(\bm{x}) e^{-i\bm{x}\cdot\bm{\omega}} {\rm d}\bm{x}. 
\end{equation*}
We use $g^\vee$ to denote the inverse Fourier transform of $g$.
Pointwise values of $\widehat{L}$ are required for some of our analysis, so we restrict the class of symbols by requiring continuity almost everywhere. Additionally, we would like to have a well-defined inverse of the symbol, so $\widehat{L}$ should not be zero on a set of positive measure.  To be precise, we define the class of admissible operators as follows.
\begin{definition}\label{def:admis}
Let $\Lop$ be a Fourier multiplier operator. Then $\Lop$ is admissible if its symbol $\widehat{L}$ is of the form $f/g$, where $f$ and $g$ are continuous functions satisfying:
\begin{enumerate}
\item The set of zeros of $fg$ has Lebesgue measure zero;
\item The  zero sets of $f$ and $g$ are disjoint.
\end{enumerate}
\end{definition}

Notice that each such operator defines a subspace of $L_2$, consisting of functions whose derivatives are also square integrable, and our approximation results focus on these spaces.
\begin{definition}
An admissible operator $\Lop$ defines a Sobolev-type subspace of $L_2(\mathbb{R}^d)$:
\begin{equation*}
W_2^\Lop(\mathbb{R}^d)
:=
\left\{f\in L_2({\mathbb R^d}):\norm{f}_{W_2^{\Lop}}<\infty\right\},
\end{equation*}
where
\begin{equation*}
\norm{f}_{W_2^{\Lop}}
:=
 \left( \int_{\mathbb R^d}\abs{\widehat f(\bm{\omega})}^2 \left(1+\abs{\widehat L(\bm{\omega})}^2\right)\rmd\bm{\omega} \right)^{1/2}.
\end{equation*}
\end{definition}

Having defined the class of admissible operators, we must consider the lattices on which the multiresolution spaces will be defined. It is important to use lattices which are nested, so we consider those defined by an expansive integer matrix.  Specifically, an integer matrix $\mathbf{A}$, whose eigenvalues are all larger than 1 in absolute value, defines a sequence of lattices
\begin{equation*}
\mathbf{A}^j\mathbb{Z}^d
=
\{\mathbf{A}^j\bm{k}:\bm{k}\in \mathbb{Z}^d \}
\end{equation*}
indexed by an integer $j$.    
Using \cite{kalker99} as a reference, we recall some results about lattices generated by a dilation matrix.
First, we know that $\mathbf{A}^{j}\mathbb{Z}^d$ can be decomposed into a finite union of disjoint copies of $\mathbf{A}^{j+1}\mathbb{Z}^d$; there are $\abs{\det(\mathbf{A})}$ vectors $\{\bm{e}_{l}\}_{l=0}^{\abs{\det(\mathbf{A})}-1}$ such that
\begin{equation*}
\bigcup_l \left(\mathbf{A}^j\bm{e}_{l} + \mathbf{A}^{j+1}\mathbb{Z}^d\right) 
= 
\mathbf{A}^{j} \mathbb{Z}^d,
\end{equation*}
and using this notation, our convention will be to set $\bm{e}_{0}=\bm{0}$.

There are also several important properties that arise when using Fourier techniques on more general lattices.  A lattice in the spatial domain corresponds to a dual lattice in the Fourier domain, and the dual lattice of $\mathbf{A}^j\mathbb{Z}^d$ is given by $2\pi(\mathbf{A}^T)^{-j}\mathbb{Z}^d$. Also relevant is the notion of a fundamental domain, which for $\mathbf{A}^j\mathbb{Z}^d$ is a bounded, measurable set $\Omega_j$ satisfying
\begin{equation*}
\sum_{\bm{k} \in \mathbb{Z}^d} \chi_{\Omega_j}(\bm{x}+\mathbf{A}^j\bm{k})
=
1
\end{equation*}
for all $\bm{x}$.
 
In this paper, we restrict our attention to lattices derived from matrices that are constant multiples of orthogonal matrices; i.e., we assume a scaling matrix $\mathbf{A}$ satisfies $\mathbf{A}=a\mathbf{R}$ for some orthogonal matrix $\mathbf{R}$ and constant $a>1$.  The lattices generated by these matrices have some additional nice properties. For example, the lattices generated by $\mathbf{A}$ and $\mathbf{A}^T$ are the same, and the lattices $\mathbf{A}^j\mathbb{Z}^d$ scale uniformly in every direction for $j\in \mathbb{Z}$.  Also, for such matrices, there are only finitely many possible lattices generated by powers of $\mathbf{A}$; i.e., there always exists a positive integer $n$ for which $\mathbf{A}^n=a^n\mathbf{I}$.  
	
In addition to the standard dilation matrices $a\mathbf{I}$ (where $a=2,3,\dots$), there are other matrices satisfying the restriction described above. For example in two dimensions, the quincunx matrix 
\begin{equation*}
\mathbf{A} 
= 
\left(
\begin{matrix}
1&1\\
1&-1
\end{matrix}
\right)
\end{equation*}
is valid, and in three dimensions, one could use
\begin{equation*}
\mathbf{A} 
= 
\left(
\begin{matrix}
2&2&-1\\
2&-1&2\\
-1&2&2
\end{matrix}
\right).
\end{equation*}

\section{Multiresolution Analysis}\label{sec:multires}

The multiresolution framework for wavelet construction was presented by Mallat in the late 1980s \cite{mallat89}.  In the following years, the notion of pre-wavelets was developed, and a more general notion of multiresolution was adopted. We consider this more general setting in order to allow for a wider variety of admissible operators.  
\begin{definition}\label{def:nonstat_mra}
A sequence  $\{V_j\}_{j\in \mathbb{Z}}$ of closed linear subspaces of $L_2(\mathbb{R}^d)$ forms a non-stationary multiresolution analysis if
\begin{enumerate}
\item $V_{j+1} \subseteq V_j$;
\item $\bigcup_{j\in \mathbb{Z}} V_j$ is dense in $L_2(\mathbb{R}^d)$ and $\bigcap_{j\in \mathbb{Z}} V_j$ is at most one-dimensional;
\item $f\in V_j$ if and only if $f(\cdot-\mathbf{A}^j\bm{k})\in V_j$ for all $j\in \mathbb{Z}$ and $\bm{k}\in \mathbb{Z}^d$, where $\mathbf{A}$ is an expansive integer
matrix;
\item For each $j\in \mathbb{Z}$, there is an element $\varphi_j\in V_j$ such that the collection of translates $\{\varphi_j(\cdot-\mathbf{A}^j\bm{k}): \bm{k}\in \mathbb{Z}^d \}$ is a Riesz basis of $V_j$, i.e., there are constants $0<A_j\leq B_j<\infty$ such that 
\begin{equation*}
A_j  \norm{c}_{\ell_2}^2 
\leq  
\norm{\sum_{\bm{k}\in \mathbb{Z}^d} c[\bm{k}] \varphi_j(\cdot-\mathbf{A}^j\bm{k})  }_{L_2(\mathbb{R}^d)}^2      
\leq 
B_j  \norm{c}_{\ell_2}^2.
\end{equation*}
\end{enumerate}
\end{definition}
Let us point out here a few remarks concerning this definition.  First of all, note that we have defined our multiresolution spaces $V_j$ to be `growing' as $j$ approaches $-\infty$. Also, in the second condition we do not require the intersection of the spaces $V_j$ to be $\{0\}$.  Instead, we allow it to be one-dimensional.  This happens, for example, when every space is generated by the dilations of a single function; i.e., there is a $\varphi\in L_{2}(\mathbb{R}^d)$ such that 
\begin{equation*}
V_j 
= 
\left\{\sum_{\bm{k}\in \mathbb{Z}^d} c[\bm{k}]\varphi(\cdot-\mathbf{A}^j \bm{k}): c\in \ell_2(\mathbb{Z}^d)\right\} 
\end{equation*}
for every $j$.

In order to produce non-stationary MRAs, we require additional properties on an admissible operator. Together with a dilation matrix, the operator should admit generalized B-splines (generators of the multiresolution spaces $V_j$) that satisfy decay and stability properties. 
 
As motivation for our definition, let us consider the one-dimensional example where $\Lop$ is defined by 
\begin{equation*}
\Lop f (t) 
= 
\frac{\rmd f}{\rmd t}(t)-\alpha f(t),
\end{equation*}
for some $\alpha>0$. A Green's function for $\Lop$ is $\rho(t)=e^{\alpha t}H(t)$, where $H$ is the Heaviside function.  In order to produce Riesz bases for the scaling matrix $\mathbf{A}=(2)$, we introduce the localization operators $\Lop_{\rmd,j}$ defined by $\Lop_{\rmd,j}f=f-e^{2^j \alpha}f(\cdot-2^j)$.   Then for any $j\in\mathbb{Z}$, the  exponential B-spline $\varphi_j:=\Lop_{\rmd,j}\rho$ is a compactly supported function whose shifts $\{\varphi_j(\cdot-2^jk)\}_{k\in\mathbb{Z}}$ form a Riesz basis.  In the Fourier domain, a formula for $\varphi_j$ is
$\widehat{L}(\omega)^{-1}\widehat{L}_{\rmd,j}(\omega)$, which is 
\begin{equation*}
\widehat{\varphi}_j(\omega) 
= 
\frac{1-e^{2^j(\alpha-i\omega)}}{i\omega-\alpha}.
\end{equation*}
In this form we verify the equivalent Riesz basis condition 
\begin{equation*}
0
<
A_j
\leq  
\sum_{k\in \mathbb{Z}} \abs{\widehat{\varphi}_j(\cdot-2\pi 2^{-j}k)  }^2  
\leq 
B_j
<
\infty.
\end{equation*}
In fact, based on the symbol of $\Lop$, we could have worked entirely in the Fourier domain to determine appropriate periodic functions $\widehat{L}_{\rmd,j}$.  With this example in mind, we make the following definition.
\begin{definition}\label{def:spadm}
We say that an operator $\Lop$ and an integer matrix $\mathbf{D}$ are a spline-admissible pair of order $r>d/2$ if the following conditions are satisfied:
\begin{enumerate}
\item $\Lop$ is an admissible Fourier multiplier operator;
\item $\mathbf{D}=a\mathbf{R}$ with $\mathbf{R}$ an orthogonal matrix and $a>1$;  
\item  There is a constant $C_{\Lop}>0$ such that
\begin{equation*}
C_{\Lop} \left( 1+\abs{\widehat{L}(\bm{\omega})}^2 \right) 
\geq 
\abs{\bm{\omega}}^{2r};
\end{equation*}
\item For every $j\in \mathbb{Z}$, there exists a periodic function $\widehat{L}_{ \mathrm{d},j}$ such that 		$\widehat{\varphi}_j(\bm{\omega}):=\widehat{L}_{\mathrm{d},j}(\bm{\omega}) \widehat{L}(\bm{\omega})^{-1}$ satisfies the Riesz basis condition 
\begin{equation*}
0
<
A_j
\leq 
\sum_{\bm{k}\in\mathbb{Z}^d}\abs{\widehat\varphi_j(\bm{\omega}+2\pi (\mathbf{D}^T)^{-j}\bm{k})}^2
\leq
B_j 
< 
\infty,
\end{equation*}
for some $A_j$ and $B_j$ in $\mathbb{R}$. Here, we require the periodic functions $\widehat{L}_{ \mathrm{d},j}$ to be of the form $\sum_{\bm{k} \in \mathbb{Z}^d} p_j[\bm{k}]e^{i\bm{\omega}\cdot \mathbf{D}^j \bm{k}}$ for some $p\in\ell_1(\mathbb Z^d)$.
\end{enumerate}
\end{definition}
\begin{definition}
Let $\Lop$ and $\mathbf{D}$ be a spline admissible pair.  The functions $\widehat{\varphi}_j$ from Condition 4 of Definition \ref{def:spadm} are in $L_2(\mathbb{R}^d)$, and we refer to the functions
\begin{equation*}
\varphi_j
:=
(\widehat{\varphi}_j)^\vee
\end{equation*}
as generalized B-splines for $\Lop$.
\end{definition} 
\begin{proposition}
Given a spline-admissible pair $\Lop$ and $\mathbf{D}$, the spaces 
\begin{equation*}
V_j
=
\left\{\sum_{\bm{k}\in \mathbb{Z}^d} c[\bm{k}]\varphi_j(\cdot-\mathbf{D}^j \bm{k}): c\in \ell_2(\mathbb{Z}^d)\right\}
\end{equation*}
form a non-stationary MRA.
\end{proposition}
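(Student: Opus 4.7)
The plan is to verify the four axioms of Definition \ref{def:nonstat_mra} in increasing order of difficulty. Translation invariance (Axiom 3) is tautological from the definition of $V_j$: translating an element by $\mathbf D^j\bm k$ merely reindexes its $\ell_2$ coefficient sequence. The Riesz basis property (Axiom 4) is a restatement of Condition 4 of Definition \ref{def:spadm}: applying Plancherel to $\norm{\sum_{\bm k} c[\bm k]\varphi_j(\cdot-\mathbf D^j\bm k)}_{L_2}^2$ and folding the integral over a fundamental domain of the dual lattice $2\pi(\mathbf D^T)^{-j}\mathbb Z^d$ converts the norm into an integral of $\abs{\widehat C(\bm\omega)}^2$ weighted by the spectral bracket $\sum_{\bm k}\abs{\widehat\varphi_j(\bm\omega+2\pi(\mathbf D^T)^{-j}\bm k)}^2$, at which point the bounds $A_j,B_j$ furnish the two-sided Riesz inequality against $\norm c_{\ell_2}^2$.

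For the nesting axiom (Axiom 1) I would derive an explicit two-scale relation. The key observation is the dual-lattice inclusion $2\pi(\mathbf D^T)^{-j}\mathbb Z^d\subset 2\pi(\mathbf D^T)^{-(j+1)}\mathbb Z^d$, which is immediate since $\mathbf D^T$ has integer entries; consequently $\widehat L_{\mathrm d,j+1}$, being periodic on the finer dual lattice, is automatically periodic on the coarser one. Reindexing its defining expansion $\sum_{\bm k\in\mathbb Z^d}p_{j+1}[\bm k]e^{i\bm\omega\cdot\mathbf D^{j+1}\bm k}$ over the sublattice $\mathbf D\mathbb Z^d\subset\mathbb Z^d$ exhibits it as a Fourier series in the exponentials $e^{i\bm\omega\cdot\mathbf D^j\bm m}$, which serves as the refinement mask. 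Setting $H_j:=\widehat L_{\mathrm d,j+1}/\widehat L_{\mathrm d,j}$ then gives $\widehat\varphi_{j+1}=H_j\widehat\varphi_j$ almost everywhere with $H_j$ a $\mathbf D^j$-periodic multiplier; the spectral characterization of principal shift-invariant spaces places $\varphi_{j+1}\in V_j$, and closing under the translations $\mathbf D^{j+1}\mathbb Z^d\subset\mathbf D^j\mathbb Z^d$ gives $V_{j+1}\subseteq V_j$. A minor technical point is that $H_j$ is a priori only defined off the zero set of $\widehat L_{\mathrm d,j}$, so one must invoke the Riesz lower bound of Condition 4 to check that this zero set has Lebesgue measure zero.

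The main obstacle is Axiom 2. Density of $\bigcup_j V_j$ as $j\to-\infty$ I would deduce from the approximation theorem of this section: for $f\in W_2^{\Lop}(\mathbb R^d)$ one controls the $L_2$-distance to $V_j$ by a quantity of order $a^{-jr}\norm{f}_{W_2^\Lop}$ using Condition 3 of Definition \ref{def:spadm}, and density of $W_2^{\Lop}$ in $L_2$ then extends the conclusion to all of $L_2(\mathbb R^d)$. For the intersection, any $f\in\bigcap_j V_j$ admits, at every scale, a representation $\widehat f=M_j\widehat\varphi_j$ with $M_j$ periodic on the coarsening dual lattice $2\pi(\mathbf D^T)^{-j}\mathbb Z^d$; as $j\to+\infty$ the only functions periodic on every such lattice are the constants, so $\widehat f$ is forced into the at-most one-dimensional subspace $\mathbb C\cdot\widehat L^{-1}$. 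Making this limiting step rigorous \emph{---} using the finite-order relation $\mathbf D^n=a^n\mathbf I$ from Section \ref{sec:prelim} to reduce the coarsening to a finite sequence of lattices, and checking that at most one scalar multiple of $\widehat L^{-1}$ is $L_2$-integrable \emph{---} is the part I expect to require the most care.
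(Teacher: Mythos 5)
Your proof is correct in outline but takes a genuinely different, self-contained route: the paper disposes of the two hard axioms by citation, invoking Theorem 4.3 of \cite{deboor93} for density and Theorem 4.9 of \cite{deboor93} (after reducing to the subsequence $V_{jn}$ with $\mathbf{D}^n=a^n\mathbf{I}$) for the intersection, and dismisses Axioms 1, 3, 4 in one line each. Your explicit two-scale relation --- reindexing $\widehat{L}_{\mathrm{d},j+1}=\sum_{\bm{k}}p_{j+1}[\bm{k}]e^{i\bm{\omega}\cdot\mathbf{D}^{j+1}\bm{k}}$ over the sublattice $\mathbf{D}\mathbb{Z}^d$ to exhibit the mask $H_j=\widehat{L}_{\mathrm{d},j+1}/\widehat{L}_{\mathrm{d},j}$ --- and your dense-subgroup argument for the intersection (the product $\widehat{f}\,\widehat{L}=M_j\widehat{L}_{\mathrm{d},j}$ is periodic with respect to every lattice $2\pi(\mathbf{D}^T)^{-j}\mathbb{Z}^d$, whose union is dense since $\mathbf{D}=a\mathbf{R}$ with $a>1$, hence is a.e.\ constant) are both sound and more informative than the citations they replace. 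One minor repair in the nesting step: knowing that the zero set of $\widehat{L}_{\mathrm{d},j}$ is Lebesgue-null does not by itself put $H_j$ in $L_2$ of a fundamental domain $\Omega_j$, which is what the Riesz-basis characterization of membership in $V_j$ actually requires; the correct use of the lower bound $A_j$ is the identity $\int_{\Omega_j}\absf{H_j}^2\sum_{\bm{k}}\absf{\widehat{\varphi}_j(\cdot+2\pi(\mathbf{D}^T)^{-j}\bm{k})}^2=(2\pi)^d\norm{\varphi_{j+1}}_{L_2}^2$, which gives $\int_{\Omega_j}\absf{H_j}^2\leq(2\pi)^d\norm{\varphi_{j+1}}_{L_2}^2/A_j$.

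The one step that would fail as written is density. You cannot invoke the approximation theorem of this section: that theorem carries the additional hypothesis that $\abs{\det(\mathbf{D}^T)}^{-2j\tilde{r}/d}(1-m_j)/(1+\absf{\widehat{L}}^2)$ is uniformly bounded on $(\mathbf{D}^T)^{-j}\Omega$, which is \emph{not} a consequence of spline-admissibility, so the rate $\abs{\det(\mathbf{D})}^{jr/d}\norm{f}_{W_2^{\Lop}}$ you quote is not available here. Density, however, needs no rate. For $f$ with $\widehat{f}$ compactly supported, once $j$ is negative enough that $\mathrm{supp}\,\widehat{f}\subset(\mathbf{D}^T)^{-j}\Omega$, the projection-error formula gives $E(f,V_j)^2\leq(2\pi)^{-d}\int\absf{\widehat{f}}^2(1-m_j)$; Condition 3 with $r>d/2$ forces $\sum_{\bm{k}\neq\bm{0}}\absf{\widehat{L}(\bm{\omega}+2\pi(\mathbf{D}^T)^{-j}\bm{k})}^{-2}\to 0$ as $j\to-\infty$, while admissibility gives $\absf{\widehat{L}(\bm{\omega})}^{-2}>0$ a.e., so $m_j\to 1$ a.e.\ and dominated convergence (using $0\leq 1-m_j\leq 1$) finishes the argument on a dense class. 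With that substitution your proof goes through.
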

\begin{proof}
The first property of Definition \ref{def:nonstat_mra} is verified using the definition of $\mathbf{D}$ and the Riesz basis conditions on the generalized B-splines $\varphi_j$.  

Density in $L_2(\mathbb{R}^d)$ is a result of the admissibility of $\Lop$, the Riesz basis condition on $\varphi_j$, and the inclusion relation $V_{j+1} \subseteq V_{j}$, cf. \cite[Theorem 4.3]{deboor93}. Also, there is an integer $n$ for which $\mathbf{D}^n=a^n\mathbf{I}$, and the intersection of the spaces $V_{jn}$ is at most one-dimensional by Theorem 4.9 of \cite{deboor93}.

Property 3 follows from the definition of the spaces $V_j$, and lastly, Property 4 of Definition \ref{def:nonstat_mra} is valid due to Property 4 of Definition \ref{def:spadm}. 
\end{proof}

The primary difficulty in proving spline-admissibility is verifying Condition 4, which concerns the existence of generalized B-splines. This problem is closely related to the localization (or `preconditioning') of radial basis functions for the construction of cardinal interpolants \cite{chui92}.  As in that paper, the idea is to construct periodic  
functions $\widehat{L}_{ \mathrm{d},j}$ that cancel the singularities of $\widehat{L}^{-1}$.  In one dimension, we can verify spline-admissibility for any constant-coefficient differential operator. In higher dimensions, spline admissibility holds for the Mat\'ern operators, characterized by $\widehat{L}(\bm{\omega})=(1+\abs{\bm{\omega}}^2)^{\nu/2}$, as they require no localization.  In Section \ref{sec:conclusion}, we provide a less obvious example and show how this Riesz basis property can be verified.  As a final point, note that if one is only interested in analyzing fine-scale spaces, Condition 4 need only be satisfied for $j$ smaller than a fixed integer $j_0$, but in this case, it is necessary to include the space $V_{j_0-1}$ in the wavelet decomposition.

We close this section by determining approximation rates for the multiresolution spaces $\{V_j\}_{j\in \mathbb{Z}}$, in terms of the operator $\Lop$ and the density of the lattices generated by $\mathbf{D}^j$.  In order to state this result, we define the spline interpolants for the operator $\Lop^*\Lop$,
whose symbol is $\absf{\widehat{L}}^2$.  The spline admissibility of this operator is the subject of the next proposition.
\begin{proposition}
If $\Lop$ is spline admissible of order $r>d/2$, then $\Lop^*\Lop$ is spline admissible of order $2r>d$.
\end{proposition}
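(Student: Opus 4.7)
My plan is to verify the four conditions of Definition \ref{def:spadm} for the operator $\Lop^*\Lop$ one by one, re-using the dilation matrix $\mathbf{D}$ and the data we already have for $\Lop$; Conditions 1--3 will reduce to short algebraic manipulations of the symbol, and the substantive step is Condition 4.

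For Condition 1, if $\widehat{L}=f/g$ witnesses admissibility of $\Lop$, then the symbol of $\Lop^*\Lop$ is $\abs{\widehat{L}}^2=(f\overline{f})/(g\overline{g})$; the new numerator and denominator are continuous, and their zero sets coincide with those of $f$ and $g$ respectively, so both hypotheses of Definition \ref{def:admis} carry over. Condition 2 is immediate since $\mathbf{D}$ is unchanged. For Condition 3, I would square the bound $C_{\Lop}(1+\abs{\widehat{L}}^2)\ge\abs{\bm\omega}^{2r}$ and apply the elementary inequality $2x\le 1+x^2$ with $x=\abs{\widehat{L}}^2$ to get $(1+\abs{\widehat{L}}^2)^2\le 2(1+\abs{\widehat{L}}^4)$, hence $2C_{\Lop}^2(1+\abs{\widehat{L}}^4)\ge\abs{\bm\omega}^{4r}$, which is exactly the order-$2r$ growth bound for the symbol $\abs{\widehat{L}}^2$.

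For Condition 4, I would choose $\widehat{L^*L}_{\mathrm{d},j}(\bm\omega):=\abs{\widehat{L}_{\mathrm{d},j}(\bm\omega)}^2$. This is of the required form because its Fourier coefficients are the autocorrelation of $p_j$, which lies in $\ell_1(\mathbb{Z}^d)$ by Young's inequality. The induced generator is $\widehat{\phi}_j=\abs{\widehat{L}_{\mathrm{d},j}}^2/\abs{\widehat{L}}^2=\abs{\widehat{\varphi}_j}^2$, i.e.\ $\phi_j=\varphi_j\ast\tilde\varphi_j$ with $\tilde\varphi_j(\bm x)=\overline{\varphi_j(-\bm x)}$. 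Writing $a_{\bm k}(\bm\omega):=\abs{\widehat{\varphi}_j(\bm\omega+2\pi(\mathbf{D}^T)^{-j}\bm k)}^2$, the known Riesz-basis bounds for $\varphi_j$ read $A_j\le\sum_{\bm k}a_{\bm k}(\bm\omega)\le B_j$, while the Riesz sum for $\phi_j$ is $\sum_{\bm k}a_{\bm k}(\bm\omega)^2$. The upper bound is free: $\sum_{\bm k}a_{\bm k}^2\le(\sum_{\bm k}a_{\bm k})^2\le B_j^2$ by non-negativity.

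The hard part is the uniform positive lower bound on $\sum_{\bm k}a_{\bm k}(\bm\omega)^2$ as $\bm\omega$ ranges over a bounded fundamental domain of $2\pi(\mathbf{D}^T)^{-j}\mathbb{Z}^d$, which I would obtain by a near/far decomposition of the lattice. Since $\abs{\widehat{L}_{\mathrm{d},j}}\le\norm{p_j}_{\ell_1}$ and Condition 3 for $\Lop$ forces $\abs{\widehat{L}(\bm\omega)}^2\gtrsim\abs{\bm\omega}^{2r}$ at infinity, one gets $a_{\bm k}(\bm\omega)\lesssim\abs{\bm\omega+2\pi(\mathbf{D}^T)^{-j}\bm k}^{-2r}$ uniformly for $\bm\omega$ in the fundamental domain, once $\abs{\bm k}$ is large. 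Because $2r>d$, the far tail $\sum_{\abs{\bm k}>R}a_{\bm k}$ can be made at most $A_j/2$ for some $R$ independent of $\bm\omega$, so the remaining $N_R<\infty$ near indices satisfy $\sum_{\abs{\bm k}\le R}a_{\bm k}\ge A_j/2$; Cauchy--Schwarz on this finite index set then yields $\sum_{\bm k}a_{\bm k}^2\ge (A_j/2)^2/N_R>0$. Combining the four conditions establishes spline admissibility of $\Lop^*\Lop$ at order $2r$.
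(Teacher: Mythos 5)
Your proposal is correct and follows essentially the same route as the paper: take $\abs{\widehat{\varphi}_j}^2$ as the new B-spline, get the upper Riesz bound from $\sum a_{\bm k}^2\le(\sum a_{\bm k})^2$, and get the lower bound by using the order-$r$ decay of $\widehat{L}^{-1}$ (with $2r>d$) to confine most of the Riesz sum to finitely many lattice points and then applying Cauchy--Schwarz there. Your explicit checks of Conditions 1 and 3 and the remark that the autocorrelation of $p_j$ stays in $\ell_1$ are details the paper leaves implicit, but they do not change the argument.
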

\begin{proof}
Let $\Lop$ be a spline admissible operator of order $r>d/2$. First notice that $\Lop^*\Lop$ is an admissible Fourier multiplier operator.  Also, we see that $\Lop^*\Lop$ satisfies Condition 3 of Definition \ref{def:spadm} with $r$ replaced by $2r$. Therefore spline admissibility follows if we can exhibit generalized B-splines for $\Lop^*\Lop$ that satisfy the Riesz basis condition, where the integer dilation matrix is the same as for $\Lop$. To that end, let $\varphi_j$ be a generalized B-spline for $\Lop$. Then we claim that $\widehat{\widetilde{\varphi}}_j:=\abs{\widehat{\varphi}_j}^2$ defines the Fourier transform of a generalized B-spline for $\Lop^*\Lop$.  

An upper Riesz bound for $\widetilde{\varphi}_j$ can be found by using the norm inequality between $\ell_1$ and $\ell_2$:
\begin{equation*}
\sum_{\bm{k}\in\mathbb{Z}^d}\abs{\widehat\varphi_j(\bm{\omega}+2\pi (\mathbf{D}^T)^{-j}\bm{k})}^4 
\leq 
\left(\sum_{\bm{k}\in\mathbb{Z}^d}\abs{\widehat\varphi_j(\bm{\omega}+2\pi (\mathbf{D}^T)^{-j}\bm{k})}^2 \right)^2
\leq 
B_j^2.
\end{equation*}

To verify the lower Riesz bound for $\widetilde{\varphi}_j$, we make use of the norm inequality
\begin{equation}\label{eq:L*Lspline1}
\sum_{\abs{\bm{k}}\leq M}\abs{\widehat\varphi_j(\bm{\omega}+2\pi (\mathbf{D}^T)^{-j}\bm{k})}^4 
\geq 
C M^{-d}\left(\sum_{\abs{\bm{k}}\leq M}\abs{\widehat\varphi_j(\bm{\omega}+2\pi (\mathbf{D}^T)^{-j}\bm{k})}^2\right)^2,
\end{equation}
for an appropriately chosen $M>0$. Now, note that the decay condition of spline admissibility implies that for $\abs{\bm{\omega}}$ sufficiently large, there is a constant $C>0$ such that
\begin{equation*}
\abs{\widehat{L}(\bm{\omega})}^{-1} \leq C\abs{\bm{\omega}}^{-r}
\end{equation*}
This decay estimate on $\widehat{L}^{-1}$ combined with the lower Riesz bound for $\varphi_j$ gives
\begin{align*}
A_j 
&\leq
 \sum_{\bm{k}\in\mathbb{Z}^d}\abs{\widehat\varphi_j(\bm{\omega}+2\pi (\mathbf{D}^T)^{-j}\bm{k})}^2\\
&\leq
 \sum_{\abs{\bm{k}}\leq M}\abs{\widehat\varphi_j(\bm{\omega}+2\pi (\mathbf{D}^T)^{-j}\bm{k})}^2 + C\abs{\widehat{L}_{\rm{d},j}(\bm{\omega})}^2 M^{d-2r} \abs{\det(\mathbf{D})}^{2jr/d},
\end{align*}
and hence
\begin{equation}\label{eq:L*Lspline2}
\sum_{\abs{\bm{k}}\leq M}\abs{\widehat\varphi_j(\bm{\omega}+2\pi (\mathbf{D}^T)^{-j}\bm{k})}^2 
\geq 
A_j - C\abs{\widehat{L}_{\rm{d},j}(\bm{\omega})}^2 M^{d-2r} \abs{\det(\mathbf{D})}^{2jr/d}.
\end{equation}

Due to the fact that $2r>d$, we can always choose $M$ large enough to make the right hand side of \eqref{eq:L*Lspline2} positive. Using the estimate \eqref{eq:L*Lspline2} in \eqref{eq:L*Lspline1} establishes a lower Riesz bound for $\widetilde{\varphi}_j$.
\end{proof}

The Riesz basis property of the generalized B-splines for $\Lop^*\Lop$ imply that the $\Lop^*\Lop$-spline interpolants $\phi_j(\bm{x})$, given by
\begin{equation}\label{eq:lagint}
\widehat{\phi}_j(\bm{\omega})
=
\abs{\det(\mathbf{D})}^j\frac{\abs{\widehat{\varphi}_j(\bm{\omega})}^2}{\sum_{\bm{k}\in \mathbb{Z}^d}\abs{\widehat{\varphi}_j(\bm{\omega}+2\pi  (\mathbf{D}^T)^{-j} \bm{k})}^2},
\end{equation}
are well-defined and also generate Riesz bases. Importantly, $\phi_j\in W_2^{\Lop}$  does not depend on the specific choice of the localization operator, as we can see from
\begin{align*}
\widehat{\phi}_j(\bm{\omega}) 
&= 
\abs{\det(\mathbf{D})}^j\frac{\abs{\widehat{L}_{\mathrm{d},j}(\bm{\omega})}^2\abs{\widehat{L}(\bm{\omega})}^{-2}}{\abs{\widehat{L}_{\mathrm{d},j}(\bm{\omega})}^2\sum_{\bm{k} \in \mathbb{Z}^d}\abs{\widehat{L}(\bm{\omega}+2\pi (\mathbf{D}^{T})^{-j}\bm{k})}^{-2}} \\
&= 
\abs{\det(\mathbf{D})}^j \frac{1}{1+\abs{\widehat{L}(\bm{\omega})}^{2}\sum_{\bm{k} \in \mathbb{Z}^d\backslash \{0\}}\abs{\widehat{L}(\bm{\omega}+2\pi (\mathbf{D}^{T})^{-j}\bm{k})}^{-2}}.
\end{align*}

These $\Lop^*\Lop$-spline interpolants play a key role in our wavelet construction, which we describe in the next section; however, for our approximation result, we are more interested in the related functions
\begin{equation}\label{eq:lagmult}
m_j(\bm{\omega})
= 
\frac{\abs{\widehat{L}(\bm{\omega})}^{-2}}{\sum_{\bm{k} \in \mathbb{Z}^d}\abs{\widehat{L}(\bm{\omega}+2\pi (\mathbf{D}^{T})^{-j}\bm{k})}^{-2}},
\end{equation}
which are also needed for the decorrelation result, Theorem \ref{th:dc}.

In order to bound the error of approximation from the spaces $V_j$, we apply the techniques developed in \cite{deboor94}. In that paper, the authors derive a characterization of certain potential spaces in terms of approximation by closed, shift-invariant subspaces of $L_2(\mathbb{R}^d)$.  The same techniques can be applied in our situation, with a few modifications to account for smoothness being determined by different operator norms. 

The error in approximating a function $f\in L_2(\mathbb{R}^d)$ by a closed function space $X$ is denoted by
\begin{equation*}
E(f,X) 
:= 
\min_{s\in X }\norm{f-s}_{L_2(\mathbb{R}^d)},
\end{equation*}
and the approximation rate is given in terms of the density of the lattice in $\mathbb R^d$. The lattice determined by $\mathbf{D}^j$ has density proportional to $\abs{\det(\mathbf{D})}^{j/d}$, so we say that the multiresolution spaces $V_{j}$ provide approximation order $\tilde{r}$ if there is a constant $C>0$ such that    
\begin{equation*}
E(f,V_{j})  
\leq 
C\abs{\det(\mathbf{D})}^{j\tilde{r}/d} \norm{f}_{W_2^\Lop(\mathbb{R}^d)},
\end{equation*}
for every $f\in W_2^\Lop(\mathbb{R}^d)$.

\begin{theorem}
For a spline-admissible pair $\Lop$ and $\mathbf{D}$ of order $r>d/2$, the multiresolution spaces $V_j$ provide approximation order $\tilde{r}\leq r$ if 
\begin{equation*}
\abs{\det (\mathbf{D}^T)}^{-2j\tilde{r}/d}  \frac{  1-m_j(\bm{\omega}) }{1+\abs{\widehat{L}(\bm{\omega})}^2} 
\end{equation*}
is bounded, independently of $j$, in $L_\infty((\mathbf{D}^T)^{-j}\Omega)$, where $\Omega=[-\pi,\pi]^d$.
\end{theorem}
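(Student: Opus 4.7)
My plan is to construct an explicit $g\in V_j$ and bound $\norm{f-g}_{L_2}$ in the Fourier domain, controlling one integral over the fundamental cell $\Omega_j:=(\mathbf{D}^T)^{-j}\Omega$ with the hypothesis and the remaining tail integral with the polynomial decay of $\widehat{L}^{-1}$ supplied by Condition~3 of Definition~\ref{def:spadm}. Since $E(f,V_j)\leq \norm{f-g}_{L_2}$, exhibiting a single good $g$ suffices.

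A natural candidate would be the cardinal interpolant built from the $\Lop^*\Lop$-spline $\phi_j$, but $\phi_j$ itself does not in general belong to $V_j$ (the quotient $\widehat{\phi}_j/\widehat{\varphi}_j$ fails to be periodic). To stay inside $V_j$, I would define $g$ by $\widehat{g}=\tau\widehat{\varphi}_j$, where $\tau$ is the $2\pi(\mathbf{D}^T)^{-j}\mathbb{Z}^d$-periodic multiplier whose restriction to $\Omega_j$ equals $\tau(\bm{\omega})=m_j(\bm{\omega})\widehat{L}(\bm{\omega})\widehat{f}(\bm{\omega})/\widehat{L}_{\mathrm{d},j}(\bm{\omega})$. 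Using the periodicity of $\widehat{L}_{\mathrm{d},j}$, a short computation gives $\widehat{g}(\bm{\omega})=m_j(\bm{\omega})\widehat{f}(\bm{\omega})$ on $\Omega_j$ and $\widehat{g}(\bm{\omega}_0+2\pi(\mathbf{D}^T)^{-j}\bm{l})=m_j(\bm{\omega}_0)\widehat{f}(\bm{\omega}_0)\widehat{L}(\bm{\omega}_0)/\widehat{L}(\bm{\omega}_0+2\pi(\mathbf{D}^T)^{-j}\bm{l})$ on every translated copy of $\Omega_j$.

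The key identity driving the error estimate is
\begin{equation*}
m_j(\bm{\omega})\,\abs{\widehat{L}(\bm{\omega})}^2\sum_{\bm{k}\neq\bm{0}}\abs{\widehat{L}(\bm{\omega}+2\pi(\mathbf{D}^T)^{-j}\bm{k})}^{-2}=1-m_j(\bm{\omega}),
\end{equation*}
which is immediate from the definition of $m_j$. The diagonal ($\bm{l}=\bm{0}$) contribution to the pointwise error is $\abs{\widehat{f}}^2(1-m_j)^2\leq \abs{\widehat{f}}^2(1-m_j)$, and the triangle inequality together with the identity above absorbs each off-diagonal cross term into the same factor $\abs{\widehat{f}}^2(1-m_j)$ rather than into the cruder ratio $(1-m_j)/m_j$. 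Folding the aliased energy back to $\Omega_j$ then yields, with absolute constants,
\begin{equation*}
\norm{f-g}_{L_2}^2\leq 3\int_{\Omega_j}\abs{\widehat{f}(\bm{\omega})}^2\bigl(1-m_j(\bm{\omega})\bigr)\,\rmd\bm{\omega}+2\int_{\mathbb{R}^d\setminus\Omega_j}\abs{\widehat{f}(\bm{\omega})}^2\,\rmd\bm{\omega}.
\end{equation*}

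To close the argument, I would multiply and divide the first integrand by $1+\abs{\widehat{L}}^2$ and pull out the $L_\infty$ bound of the hypothesis, obtaining a constant times $\abs{\det(\mathbf{D})}^{2j\tilde{r}/d}\norm{f}_{W_2^{\Lop}}^2$. For the tail, the structural assumption $\mathbf{D}=a\mathbf{R}$ with $\mathbf{R}$ orthogonal implies that $\Omega_j$ contains the Euclidean ball of radius $\pi a^{-j}$, so Condition~3 gives $1/(1+\abs{\widehat{L}(\bm{\omega})}^2)\leq C_{\Lop}\pi^{-2r}\abs{\det(\mathbf{D})}^{2jr/d}$ on the complement; since $\tilde{r}\leq r$ this is absorbed into $\abs{\det(\mathbf{D})}^{2j\tilde{r}/d}$ for $j\leq 0$, while for $j>0$ the trivial bound $\norm{f-g}_{L_2}^2\leq 2\norm{f}_{W_2^{\Lop}}^2\leq 2\abs{\det(\mathbf{D})}^{2j\tilde{r}/d}\norm{f}_{W_2^{\Lop}}^2$ already exceeds the required growth. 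Taking a square root delivers the claimed approximation order. The main technical hurdle I anticipate is the first step: producing an approximant that lives in $V_j$ (rather than in the $\Lop^*\Lop$-multiresolution generated by $\phi_j$) whose error formula directly exposes the factor $1-m_j$ controlled by the hypothesis; the identity above is precisely what makes this possible, and it keeps the constant bounded even where $m_j$ is small away from the origin.
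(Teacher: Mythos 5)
Your argument is correct, and it reaches the two-term estimate (a fundamental-domain term controlled by the hypothesis on $(1-m_j)/(1+\absf{\widehat{L}}^2)$ plus a tail term controlled by Condition~3) that also underlies the paper's proof; but the route to the main term is genuinely different. The paper rescales to the integer lattice, splits off $\widehat{f}_j\chi_\Omega$, and then invokes the exact $L_2$-projection-error formula for principal shift-invariant spaces from de Boor--DeVore--Ron (their Theorem 2.20), which delivers $E(\widehat{f}_j\chi_\Omega,\widehat{V}_j^j)^2=\int_\Omega\absf{\widehat{f}_j}^2(1-m_j((\mathbf{D}^T)^{-j}\cdot))$ with no construction needed. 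You instead exhibit an explicit suboptimal approximant and verify the bound by hand via the identity $m_j\absf{\widehat{L}}^2\sum_{\bm{k}\neq\bm{0}}\absf{\widehat{L}(\cdot+2\pi(\mathbf{D}^T)^{-j}\bm{k})}^{-2}=1-m_j$, which correctly turns the total aliased energy into $m_j\absf{\widehat{f}}^2(1-m_j)$ and keeps everything under the factor $1-m_j$. One small point worth making explicit: your multiplier, as written, has $\widehat{L}_{\mathrm{d},j}$ in a denominator, but it simplifies to $\tau=\widehat{\varphi}_j^{\,*}\widehat{f}\big/\sum_{\bm{k}}\absf{\widehat{\varphi}_j(\cdot+2\pi(\mathbf{D}^T)^{-j}\bm{k})}^2$ on $\Omega_j$ (the orthogonal projection with the numerator left unperiodized), so the Riesz bounds $A_j,B_j$ give $\tau\in L_2(\Omega_j)$ and hence $g\in V_j$ with no singularity issue. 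What each approach buys: the paper's is shorter and uses the optimal projector, so the identity $E^2=\int\absf{\widehat{f}}^2(1-m_j)$ is exact rather than an upper bound; yours is self-contained (no external theorem, no rescaling to the integer lattice), and it also handles explicitly the trivial regime $j>0$ and the absorption $\absf{\det\mathbf{D}}^{2jr/d}\leq\absf{\det\mathbf{D}}^{2j\tilde{r}/d}$ for $j\leq 0$, steps the paper leaves implicit when it "substitutes the estimates."
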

\begin{proof}
This result is a consequence of \cite[Theorem 4.3]{deboor94}. To show this let us introduce the notation $f_j(\cdot)=f(\mathbf{D}^{j}\cdot)$, which implies  that $\widehat{f}_j=\abs{\det(\mathbf{D})}^{-j}\widehat{f} \circ (\mathbf{D}^T)^{-j}$, where $\circ$ denotes composition. The spaces $V_j$ are scaled copies of the integer shift-invariant spaces 
\begin{align*}
V_{j}^j 
&:= 
\{ s(\mathbf{D}^j \cdot): s\in V_j \} \\
&= 
\left\{ \sum_{\bm{k}\in \mathbb{Z}^d} c[\bm{k}]\varphi_j (\mathbf{D}^j (\cdot- \bm{k})): c\in \ell_2(\mathbb Z^d)\right\}.
\end{align*}
We then write the error of approximating a function $f\in W_2^\Lop(\mathbb{R}^d)$ from $V_j$ in terms of approximation by $\widehat{V}_j^j$ as
\begin{align*}
E(f,V_j) 
&= 
\abs{\det (\mathbf{D})}^{j/2}E(f_j,V_j^j) \\
&= 
(2\pi)^{-d/2}\abs{\det (\mathbf{D})}^{j/2}E(\widehat{f}_j,\widehat{V}_j^j),
\end{align*}
where $\widehat{V}_j^j$ is composed of the Fourier transforms of functions in $V_j^j$.  Separating this last term, we have
\begin{equation}\label{eq:approx1}
E(f,V_j) 
\leq  
(2\pi)^{-d/2}\abs{\det(\mathbf{D})}^{j/2}\left( E(\widehat{f}_j\chi_\Omega,\widehat{V}_j^j) +\norm{(1-\chi_\Omega)\widehat{f}_j}_2 \right),
\end{equation}
where $\chi_\Omega$ is the characteristic function of the set $\Omega$. We are now left with bounding both terms on the right-hand side of \eqref{eq:approx1}. First, we have
\begin{align*}
\norm{(1-\chi_\Omega)\widehat{f}_j}_2^2 
&= 
\int_{\mathbb R^d\backslash \Omega} \abs{\widehat{f}_j(\bm{\omega})}^2 \rmd\bm{\omega} \\
&= 
\abs{\det(\mathbf{D})}^{-2j} \int_{\mathbb R^d\backslash \Omega} \abs{\widehat{f}((\mathbf{D}^T)^{-j}\bm{\omega})}^2 \frac{1+\abs{\widehat{L}((\mathbf{D}^T)^{-j}\bm{\omega})}^2}{1+\abs{\widehat{L}((\mathbf{D}^T)^{-j}\bm{\omega})}^2} \rmd\bm{\omega}, 
\end{align*}
and since $\Lop$ is spline-admissible of order $r$ 
\begin{equation*}
\norm{(1-\chi_\Omega)\widehat{f}_j}_2^2 
\leq 
C_{\Lop} \abs{\det(\mathbf{D})}^{2jr/d-j} \norm{f}_{W_2^\Lop}^2.
\end{equation*}
Therefore
\begin{equation}\label{eq:approx2}
\abs{\det(\mathbf{D})}^{j/2}\norm{(1-\chi_\Omega)\widehat{f}_j}_2 
\leq 
C_{\Lop}^{1/2} \abs{\det(\mathbf{D})}^{jr/d} \norm{f}_{W_2^\Lop}.
\end{equation}
In order to bound the remaining term, we need a formula for the projection of $\widehat{f}_j\chi_\Omega$ onto $V_j^j$.  Notice that
\begin{align*}
1-m_j((\mathbf{D}^T)^{-j}\cdot) 
&= 
1-\frac{\abs{\widehat{\varphi}_j\circ (\mathbf{D}^T)^{-j}}^2}{\sum_{\bm{k}\in  \mathbb Z^d}\abs{\widehat{\varphi}_j\circ (\mathbf{D}^T)^{-j}(\cdot - 2\pi \bm{k})}^2}  \\
&= 
1-\frac{\abs{\widehat{\varphi_j \circ \mathbf{D}^j}}^2}{\sum_{\bm{k}\in  \mathbb Z^d}\abs{\widehat{\varphi_j \circ \mathbf{D}^j}(\cdot - 2\pi \bm{k})}^2} ,
\end{align*}
so we apply \cite[Theorem 2.20]{deboor94} to get
\begin{align*}
E(\widehat{f}_j\chi_\Omega,\widehat{V}_j^j)^2 
&=  
\int_{\Omega} \abs{\widehat{f}_j}^2 (1-m_j((\mathbf{D}^T)^{-j}\cdot) ) \\
&=  
\abs{\det(\mathbf{D})}^{-2j} \int_{\Omega}  \abs{\widehat{f} \circ (\mathbf{D}^T)^{-j}}^2 (1-m_j((\mathbf{D}^T)^{-j}\cdot) ).
\end{align*}
Now, changing variables gives
\begin{align*}
E(\widehat{f}_j\chi_\Omega,\widehat{V}_j^j)^2 
&=  
\abs{\det(\mathbf{D})}^{-j} \int_{(\mathbf{D}^T)^{-j} \Omega}  \abs{\widehat{f} }^2 \left(1+\abs{\widehat{L}}^2\right) \frac{1-m_j}{1+\abs{\widehat{L}}^2} \\
&\leq 
\abs{\det(\mathbf{D})}^{-j} \norm{f}_{W_2^\Lop}^2 \norm{ \frac{1-m_j}{1+\abs{\widehat{L}}^2}}_{L_\infty((\mathbf{D}^T)^{-j}\Omega)}. 
\end{align*}
Applying our assumption on $(1-m_j)$, we have
\begin{equation}\label{eq:approx3}
\abs{\det(\mathbf{D})}^{j/2} E(\widehat{f}_j\chi_\Omega,\widehat{V}_j^j) 
\leq 
C \abs{\det(\mathbf{D})}^{j \tilde{r}/d} \norm{f}_{W_2^\Lop}.
\end{equation}
Substituting the estimates \eqref{eq:approx2} and  \eqref{eq:approx3} into \eqref{eq:approx1} yields the result. 
\end{proof}

Concerning this theorem, an important point is that it describes the approximation properties of the MRA entirely in terms of the operator; i.e., the guaranteed approximation rates are independent of how one chooses the generalized B-splines $\varphi_j$ for the multiresolution spaces $V_j$.

\section{Operator-Like Wavelets and Riesz Bases}\label{sec:wave}

Using the non-stationary MRA defined in the previous section, we define the scale of wavelet spaces $W_j$ by the relationship
\begin{equation*}
V_{j}
=
V_{j+1} \oplus W_{j+1};
\end{equation*}
i.e., $W_{j+1}$ is the orthogonal complement of $V_{j+1}$ in $V_j$. Our goal in this section is to define Riesz bases for these spaces and for $L_2(\mathbb{R}^d)$.  To begin, let us define the functions
\begin{equation*}
\psi_{j+1}
:=
\Lop^*\phi_{j},
\end{equation*}
which we claim generate Riesz bases for the wavelet spaces, under mild conditions on the operator $\Lop$.  First, note that $\psi_{j+1}$ is indeed in $V_{j}$, because its Fourier transform $\widehat{\psi}_{j+1}$ is a periodic multiple of $\widehat{\varphi}_j$, and thus
\begin{equation}\label{eq:oplikewavfourier}
\widehat{\psi}_{j+1}(\bm{\omega})
=
\abs{\det(\mathbf{D})}^j \frac{\widehat{L}_{\mathrm{d},j}(\bm{\omega})^*}{ \sum_{\bm{k}\in \mathbb{Z}^d}\abs{\widehat{\varphi}_j(\bm{\omega}+2\pi  (\mathbf{D}^T)^{-j} \bm{k})}^2   }     \widehat{\varphi}_j(\bm{\omega}).
\end{equation}
A direct implication of our wavelet construction is the following property.
\begin{property}
The wavelet function $\psi_{j+1}$ behaves like a multiscale version of the underlying operator  $\Lop$ in the sense that, for any $f\in W_2^{\Lop}$, we have $f*\psi_{j+1}=\Lop^*(f*\phi_j)$.  Hence, in the case where $\phi_j$ is a lowpass filter, $\{\Lop^*(f*\phi_j)\}_{j\in \mathbb{Z}}$ corresponds to a multiscale representation of $\Lop^*f$.
\end{property}

The next few results focus on showing that the $\mathbf{D}^j\mathbb{Z}^d \setminus \mathbf{D}^{j+1}\mathbb{Z}^d$ shifts of $\psi_{j+1}$ are orthogonal to $V_{j+1}$ and generate a Riesz basis of $W_{j+1}$. 
\begin{proposition}\label{prop:orthog_v_w}
The wavelets $\{\psi_{j+1}(\cdot-\mathbf{D}^j\bm{k})\}_{\bm{k}\in{\mathbb{Z}^d\backslash \mathbf{D}\mathbb{Z}^d}}$ are orthogonal to the space $V_{j+1}$.
\end{proposition}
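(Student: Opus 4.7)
The plan is to expand $\psi_{j+1}$ and $\varphi_{j+1}$ in Fourier space and reduce orthogonality to a statement about the vanishing of certain Fourier coefficients of a periodic function. Since $\{\varphi_{j+1}(\cdot-\mathbf{D}^{j+1}\bm{\ell})\}_{\bm{\ell}\in\mathbb{Z}^d}$ is a Riesz basis of $V_{j+1}$, it suffices to show that $\langle \psi_{j+1}(\cdot-\mathbf{D}^j\bm{k}),\,\varphi_{j+1}(\cdot-\mathbf{D}^{j+1}\bm{\ell})\rangle=0$ for every $\bm{k}\in\mathbb{Z}^d\setminus\mathbf{D}\mathbb{Z}^d$ and $\bm{\ell}\in\mathbb{Z}^d$. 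Writing $\mathbf{D}^j\bm{k}-\mathbf{D}^{j+1}\bm{\ell}=\mathbf{D}^j(\bm{k}-\mathbf{D}\bm{\ell})$ and noting that the coset of $\bm{k}-\mathbf{D}\bm{\ell}$ in $\mathbb{Z}^d/\mathbf{D}\mathbb{Z}^d$ is still nontrivial, it is enough to handle the case $\bm{\ell}=\bm{0}$.

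Next I would apply Parseval and periodize the integrand over the dual lattice $2\pi(\mathbf{D}^T)^{-j}\mathbb{Z}^d$, getting
\begin{equation*}
\langle \psi_{j+1}(\cdot-\mathbf{D}^j\bm{k}),\varphi_{j+1}\rangle
=(2\pi)^{-d}\int_{\Omega_j'} P(\bm{\omega})\,e^{-i\bm{\omega}\cdot\mathbf{D}^j\bm{k}}\,\rmd\bm{\omega},
\end{equation*}
where $\Omega_j'$ is a fundamental domain of $2\pi(\mathbf{D}^T)^{-j}\mathbb{Z}^d$ and
\begin{equation*}
P(\bm{\omega})=\sum_{\bm{m}\in\mathbb{Z}^d}\widehat{\psi}_{j+1}\!\left(\bm{\omega}+2\pi(\mathbf{D}^T)^{-j}\bm{m}\right)\overline{\widehat{\varphi}_{j+1}\!\left(\bm{\omega}+2\pi(\mathbf{D}^T)^{-j}\bm{m}\right)}.
\end{equation*}
Using the identities $\widehat{\psi}_{j+1}=\widehat{L}^{*}\,\widehat{\phi}_j$ and $\widehat{\varphi}_{j+1}=\widehat{L}_{\mathrm{d},j+1}\widehat{L}^{-1}$, the factors of $\widehat{L}$ cancel and one obtains $\widehat{\psi}_{j+1}\,\overline{\widehat{\varphi}_{j+1}}=\overline{\widehat{L}_{\mathrm{d},j+1}}\,\widehat{\phi}_j$. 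The function $\widehat{L}_{\mathrm{d},j+1}$ is periodic on $2\pi(\mathbf{D}^T)^{-(j+1)}\mathbb{Z}^d$, which contains $2\pi(\mathbf{D}^T)^{-j}\mathbb{Z}^d$, so it pulls out of the sum, leaving
\begin{equation*}
P(\bm{\omega})=\overline{\widehat{L}_{\mathrm{d},j+1}(\bm{\omega})}\sum_{\bm{m}\in\mathbb{Z}^d}\widehat{\phi}_j\!\left(\bm{\omega}+2\pi(\mathbf{D}^T)^{-j}\bm{m}\right).
\end{equation*}

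The next step is the key simplification: the explicit formula \eqref{eq:lagint} for $\widehat{\phi}_j$ shows, after plugging in and using that the denominator is periodic on $2\pi(\mathbf{D}^T)^{-j}\mathbb{Z}^d$, that the periodization of $\widehat{\phi}_j$ equals the constant $\abs{\det(\mathbf{D})}^j$. (This is exactly the cardinal interpolation condition $\phi_j(\mathbf{D}^j\bm{k})=\delta_{\bm{k},\bm{0}}$ read through Poisson summation.) Hence $P(\bm{\omega})=\abs{\det(\mathbf{D})}^j\,\overline{\widehat{L}_{\mathrm{d},j+1}(\bm{\omega})}$, which is periodic on the \emph{finer} lattice $2\pi(\mathbf{D}^T)^{-(j+1)}\mathbb{Z}^d$.

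Finally, because $P$ is periodic on the finer dual lattice, its Fourier expansion over the coarser fundamental domain $\Omega_j'$ is supported on the sublattice $\mathbf{D}^{j+1}\mathbb{Z}^d\subset\mathbf{D}^j\mathbb{Z}^d$. Since $\bm{k}\notin\mathbf{D}\mathbb{Z}^d$, the frequency $\mathbf{D}^j\bm{k}$ does not lie in $\mathbf{D}^{j+1}\mathbb{Z}^d$, so the Fourier coefficient vanishes, establishing the claim. The one point that requires care is the legitimacy of swapping sum and integral; this will be justified by the upper Riesz bounds supplied by spline admissibility, which ensure $P\in L_\infty(\Omega_j')$ and that the series for $P$ converges in $L_1(\Omega_j')$. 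The rest is algebra, and I expect no further obstacles.
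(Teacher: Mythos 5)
Your proposal is correct and follows essentially the same route as the paper: pass to the Fourier domain, periodize over $2\pi(\mathbf{D}^T)^{-j}\mathbb{Z}^d$, use the interpolation identity $\sum_{\bm{m}}\widehat{\phi}_j(\cdot+2\pi(\mathbf{D}^T)^{-j}\bm{m})=\abs{\det(\mathbf{D})}^{j}$ to reduce everything to $\widehat{L}_{\mathrm{d},j+1}$, and conclude because its frequencies lie on $\mathbf{D}^{j+1}\mathbb{Z}^d$ while $\mathbf{D}^{j}\bm{k}$ does not. The only cosmetic differences are that you spell out the reduction to $\bm{\ell}=\bm{0}$ and phrase the last step as a statement about lattice-supported Fourier coefficients instead of integrating the series for $\widehat{L}_{\mathrm{d},j+1}$ term by term.
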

\begin{proof}
It suffices to show $\langle  \varphi_{j+1},  \psi_{j+1}(\cdot-\mathbf{D}^j\bm{k})  \rangle=0$ for every  $\bm{k} \in \mathbb{Z}^d\backslash \mathbf{D}\mathbb{Z}^d$.  From \eqref{eq:oplikewavfourier}, we have
\begin{align*}
\langle  \varphi_{j+1},  \psi_{j+1}(\cdot-\mathbf{D}^j\bm{k})  \rangle 
&= 
\int_{\mathbb{R}^d} \widehat{\varphi}_{j+1}(\bm{\omega})e^{i\bm{\omega} \cdot\mathbf{D}^j\bm{k}} \widehat{L}(\bm{\omega})\widehat{\phi}_j(\bm{\omega}) \rmd\bm{\omega} \\
&= 
\int_{\mathbb{R}^d} \widehat{L}_{\mathrm{d},j+1}(\bm{\omega})e^{i\bm{\omega} \cdot\mathbf{D}^j\bm{k}} \widehat{\phi}_j(\bm{\omega}) \rmd\bm{\omega}.
\end{align*}
Now let $\Omega$ be a fundamental domain for the lattice $2\pi (\mathbf{D}^T)^{-j}\mathbb{Z}^d$. Then
\begin{align*}
\langle  \varphi_{j+1},  \psi_{j+1}(\cdot-\mathbf{D}^j\bm{k})  \rangle 
&= 
\int_{\Omega} \widehat{L}_{\mathrm{d},j+1}(\bm{\omega})e^{i\bm{\omega} \cdot\mathbf{D}^j\bm{k}} \sum_{\bm{n} \in \mathbb{Z}^d}\widehat{\phi}_j(\bm{\omega}-2\pi (\mathbf{D}^T)^{-j} \bm{n}) \rmd\bm{\omega} \\
&= 
\abs{\det(\mathbf{D})}^j \int_{\Omega} \widehat{L}_{\mathrm{d},j+1}(\bm{\omega})e^{i\bm{\omega} \cdot\mathbf{D}^j\bm{k}}  \rmd\bm{\omega}.
\end{align*}
From Definition \ref{def:spadm}, we know that $\widehat{L}_{\mathrm{d},j+1}$ has a series representation of the form $\sum_{\bm{n} \in \mathbb{Z}^d} p_{j+1}[\bm{n}]e^{i\bm{\omega}\cdot \mathbf{D}^{j+1} \bm{n}}$, so
\begin{align*}
\langle  \varphi_{j+1},  \psi_{j+1}(\cdot-\mathbf{D}^j\bm{k})  \rangle 
&= 
\abs{\det(\mathbf{D})}^j \sum_{\bm{n} \in \mathbb{Z}^d} p_{j+1}[\bm{n}] \int_{\Omega} e^{i\bm{\omega}\cdot \mathbf{D}^j  \left(\mathbf{D}\bm{n} + \bm{k} \right)} \rmd\bm{\omega}\\
&= 
\sum_{\bm{n} \in \mathbb{Z}^d} p_{j+1}[\bm{n}] \int_{[0,2\pi]^d} e^{i\bm{\omega}\cdot   \left(\mathbf{D}\bm{n} + \bm{k} \right)} \rmd\bm{\omega}.
\end{align*}
Since $\bm{k} \notin \mathbf{D} \mathbb{Z}^d$, we see that $\mathbf{D}\bm{n} + \bm{k} \neq 0$ for any $\bm{n}$, and this implies that $\langle  \varphi_{j+1},  \psi_{j+1}(\cdot-\mathbf{D}^j\bm{k})  \rangle=0$. 
\end{proof}

In order to prove that the $\mathbf{D}^j\mathbb{Z}^d \setminus \mathbf{D}^{j+1}\mathbb{Z}^d$ shifts of $\psi_{j+1}$ form a Riesz basis of the wavelet space $W_{j+1}$, we introduce notation that will help us formulate the problem as a shift-invariant one.  In the following definition, we use the fact that there is a set of vectors
\begin{equation*}
\left\{\bm{e}_{l}\in \mathbb{Z}^d: l=0,..,\abs{\det(\mathbf{D})}-1\right\}
\end{equation*} 
such that
\begin{equation*}
\bigcup_{l=0}^{\abs{\det(\mathbf{D})}-1} \left(\mathbf{D}^j\bm{e}_{l} + \mathbf{D}^{j+1}\mathbb{Z}^d\right) 
= 
\mathbf{D}^{j} \mathbb{Z}^d.
\end{equation*}
\begin{definition}
For every $j\in\mathbb{Z}$ and every $l\in \{1,\dots,\abs{\det(\mathbf{D})}-1\}$, we define the 
wavelets 
\begin{equation*}
\psi_{j+1}^{(l)} (\bm{x})
:= 
\psi_{j+1} (\bm{x}-\mathbf{D}^j\bm{e}_{l}),
\end{equation*}
and we define the collections
\begin{equation*}
\Psi
:=
\Psi_{j+1}
:=
\left\{\psi^{(l)}_{j+1}\right\}_{l=1}^{\abs{\det(\mathbf{D})}-1}.
\end{equation*}
\end{definition}
In the following, necessary and sufficient conditions on the operator $\Lop$ are given which guarantee that $\Psi_{j+1}$ generates a Riesz basis of $W_{j+1}$.  The technique used is called fiberization, and it can be applied to characterize finitely generated shift-invariant spaces \cite{ron95}.
In this setting, a collection of functions defines a Gramian matrix, and the property of being a Riesz basis is equivalent to the Gramian having bounded eigenvalues.  In our situation, the Gramian for $\Psi$ is 
\begin{align*}
G_{\Psi}(\bm{\omega})
&= 
\abs{\det(\mathbf{D})}^{-j-1} \left( \sum_{\bm{\beta} \in 2\pi (\mathbf{D}^T)^{-j-1}\mathbb{Z}^d}   \widehat{\psi^{(k)}_{j+1}}\left(\bm{\omega}+\bm{\beta} \right) \widehat{\psi^{(l)}_{j+1}}\left(\bm{\omega}+\bm{\beta} \right)^* \right)_{k,l}\\ 
&= 
\abs{\det(\mathbf{D})}^{-j-1} \left( \sum_{\bm{\beta} \in 2\pi (\mathbf{D}^T)^{-j-1}\mathbb{Z}^d} e^{- i \mathbf{D}^{j}(\bm{e}_k-\bm{e}_l)\cdot (\bm{\omega}+\bm{\beta})} \abs{\widehat{\psi}_{j+1}(\bm{\omega}+\bm{\beta})}^2  \right)_{k,l},
\end{align*}
where $k$ and $l$ range from $1$ to $\abs{\det(\mathbf{D})}-1$. The normalization factor $\abs{\det(\mathbf{D})}^{-j-1}$ accounts for scaling of the lattice.

Let us denote the largest and smallest eigenvalues of $G_{\Psi}(\bm{\omega})$ by $\Lambda(\bm{\omega})$ and $\lambda(\bm{\omega})$, respectively. Then the collection $\Psi$ generates a Riesz basis if and only if $\Lambda$ and $1/\lambda$ are essentially bounded (cf. \cite{ron95} Theorem 2.3.6).  To simplify this matrix without changing the eigenvalues, we apply the similarity transformation $T(\bm{\omega})^{-1}G_{\Psi}(\bm{\omega})T(\bm{\omega})$, where $T$ is the square diagonal matrix with diagonal entry $e^{-i\mathbf{D}^{j}\bm{e}_l\cdot\bm{\omega}}$ in row $l$.  This transformation multiplies column $l$ of $G_\Psi$ by $e^{-i\mathbf{D}^{j}\bm{e}_l\cdot\bm{\omega}}$ and row $k$ of $G_\Psi$ by $e^{i\mathbf{D}^{j}\bm{e}_k\cdot\bm{\omega}}$. Since the eigenvalues are unchanged, let us call this new matrix $G_\Psi$ as well. We then have
\begin{equation*}
G_\Psi (\bm{\omega}) 
=  
\abs{\det(\mathbf{D})}^{-j-1}\left(  \sum_{\bm{\beta}\in 2\pi (\mathbf{D}^T)^{-j-1}\mathbb{Z}^d} e^{- i \mathbf{D}^{j}(\bm{e}_k-\bm{e}_l)\cdot \bm{\beta}} \abs{\widehat{\psi}_{j+1}(\bm{\omega}+\bm{\beta})}^2  \right)_{k,l}.
\end{equation*}
Using the fact that $\bigcup_m \left(\bm{e}_m+\mathbf{D}^T\mathbb{Z}^d\right)=\mathbb{Z}^d$ and the notation
\begin{equation}\label{eq:rieszbds_cmw}
c(m;\bm{\omega})
:=  
\abs{\det(\mathbf{D})}^{-j-1}\sum_{\bm{\beta}\in 2\pi (\mathbf{D}^T)^{-j} \mathbb{Z}^d}  \abs{\widehat{\psi}_{j+1}(\bm{\omega}+2\pi  (\mathbf{D}^T)^{-j-1} \bm{e}_m+\bm{\beta})}^2,
\end{equation}
we write
\begin{equation*}
G_\Psi(\bm{\omega}) 
=  
\left( \sum_{m=0}^{\abs{\det(\mathbf{D})}-1} c(m;\bm{\omega})  e^{-2\pi i (\bm{e}_k-\bm{e}_l)\cdot (\mathbf{D}^T)^{-1}\bm{e}_m} \right)_{k,l}.
\end{equation*}
\begin{definition}
Let $\mathbf{H}$ be the $\abs{\det(\mathbf{D})} \times \abs{\det(\mathbf{D})}$ matrix 
\begin{equation*}
\mathbf{H} 
:= 
\abs{\det(\mathbf{D})}^{-1/2} \left( e^{2\pi i\bm{e}_m\cdot (\mathbf{D}^T)^{-1}\bm{e}_k} \right)_{k,m},
\end{equation*}
which is the complex conjugate of the discrete Fourier transform matrix for the lattice generated by $\mathbf{D}^T$ \cite{vaidyanathan90}. 
Here,  $k$ and $m$ range over the index set $\mathcal{M}:=\{0,\dots,\abs{\det(\mathbf{D})}-1\}$. Also, define $\mathbf{H}_0$  to be the submatrix obtained by removing column $0$ from $\mathbf{H}$. 
\end{definition}
\begin{lemma}\label{lem:eig_values}
The minimum and maximum eigenvalues $\lambda(\bm{\omega}),\Lambda(\bm{\omega})$ of the Gramian matrix $G_{\Psi}(\bm{\omega})$ satisfy the following properties:
\begin{enumerate}[(i)]
\item 
${\displaystyle  \lambda(\bm{\omega}) \geq \abs{\det(\mathbf{D})} \min_{m\in\mathcal{M}} c(m;\bm{\omega}) \quad \text{and} \quad \Lambda(\bm{\omega})  \leq \abs{\det(\mathbf{D})} \max_{m\in\mathcal{M}} c(m;\bm{\omega}) }$
\item 
There is a constant $C>0$ such that 
\begin{equation*}
\lambda(\bm{\omega}) 
\geq 
C \abs{\det(\mathbf{D})} \max_{m_0(\bm{\omega}) \in\mathcal{M}  }\min_{ m\in \mathcal{M} \setminus \{m_0(\bm{\omega})\} } c(m;\bm{\omega}).
\end{equation*}
\item If for any fixed $\bm{\omega}\in\mathbb{R}^d$, there exist distinct $m_1(\bm{\omega}),m_2(\bm{\omega})\in \mathcal{M}$ such that $c(m_1(\bm{\omega});\bm{\omega})=c(m_2(\bm{\omega});\bm{\omega})=0$, then $\lambda(\bm{\omega})=0$.
\end{enumerate}
\end{lemma}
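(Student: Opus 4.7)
The plan is to exploit a congruence factorization of $G_\Psi$ through the Fourier matrix $\mathbf{H}$. Writing $N := \absf{\det(\mathbf{D})}$, a direct substitution from the definitions shows that
\begin{equation*}
G_\Psi(\bm{\omega}) = N\,\mathbf{H}_0^* C(\bm{\omega})\,\mathbf{H}_0,
\end{equation*}
where $C(\bm{\omega})$ is the $N \times N$ diagonal matrix with entries $c(m;\bm{\omega})$, $m \in \mathcal{M}$. The matrix $\mathbf{H}$ is unitary on $\mathbb{C}^N$, so its columns form an orthonormal basis; consequently $\mathbf{H}_0^* \mathbf{H}_0 = I_{N-1}$, and the removed $0$th column $\bm{h}_0$ is the normalized all-ones vector $N^{-1/2}(1,\ldots,1)^T$ since $\bm{e}_0 = \bm{0}$. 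As $\bm{v}$ ranges over the unit sphere in $\mathbb{C}^{N-1}$, the vector $\bm{u} := \mathbf{H}_0 \bm{v}$ therefore ranges exactly over the unit sphere of the hyperplane $\bm{h}_0^\perp = \{\bm{u} \in \mathbb{C}^N : \sum_m u_m = 0\}$, and the key variational identity
\begin{equation*}
\bm{v}^* G_\Psi(\bm{\omega}) \bm{v} = N \sum_{m \in \mathcal{M}} c(m;\bm{\omega})\,\absf{u_m}^2
\end{equation*}
reduces the study of the extreme eigenvalues of $G_\Psi$ to a constrained optimization over the $c$-values.

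With this identity in hand, parts (i) and (iii) are essentially immediate. For (i), bounding each $c(m;\bm{\omega})$ between $\min_m c(m;\bm{\omega})$ and $\max_m c(m;\bm{\omega})$ and using $\sum_m \absf{u_m}^2 = 1$ produces the two estimates after taking the infimum (resp.\ supremum) of the Rayleigh quotient. For (iii), if $c(m_1;\bm{\omega}) = c(m_2;\bm{\omega}) = 0$ for distinct $m_1,m_2 \in \mathcal{M}$, then the explicit unit vector with $u_{m_1} = 1/\sqrt{2}$, $u_{m_2} = -1/\sqrt{2}$, and all other entries zero lies in the hyperplane $\sum_m u_m = 0$ and kills the Rayleigh quotient, so $\lambda(\bm{\omega}) = 0$.

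The main step is (ii), where one must use the hyperplane constraint quantitatively in order to absorb a single vanishing $c$-value. Fix any $m_0 \in \mathcal{M}$ and set $c_* := \min_{m \neq m_0} c(m;\bm{\omega})$. Dropping the nonnegative $m_0$-summand and bounding the remaining coefficients below by $c_*$ gives $\sum_m c(m;\bm{\omega}) \absf{u_m}^2 \geq c_*(1 - \absf{u_{m_0}}^2)$. The relation $u_{m_0} = -\sum_{m \neq m_0} u_m$ combined with the Cauchy--Schwarz inequality yields $\absf{u_{m_0}}^2 \leq (N-1)\sum_{m \neq m_0} \absf{u_m}^2 = (N-1)(1 - \absf{u_{m_0}}^2)$, whence $1 - \absf{u_{m_0}}^2 \geq 1/N$. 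Therefore $\bm{v}^* G_\Psi \bm{v} \geq N \cdot c_*/N = c_*$ for every unit vector $\bm{v}$, and taking the infimum over $\bm{v}$ followed by the supremum over $m_0$ delivers the inequality of (ii) with constant $C = 1/N$. The critical ingredient is this Cauchy--Schwarz step: it is precisely what allows the bound to survive a single vanishing coefficient, and its sharpness explains why the lower bound must collapse as soon as two coefficients vanish simultaneously, matching the negative conclusion in (iii).
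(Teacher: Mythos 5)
Your proof is correct and follows essentially the same route as the paper: the congruence factorization $G_\Psi = \absf{\det(\mathbf{D})}\,\mathbf{H}_0^*\mathcal{D}\mathbf{H}_0$, the identification of the range of $\mathbf{H}_0$ with the hyperplane orthogonal to $(1,\dots,1)^T$, and the same test vector for (iii). The only (minor) refinement is in (ii), where the paper merely asserts the existence of the constant $C$ from the fact that no standard unit vector lies in that hyperplane, while your Cauchy--Schwarz step pins down the sharp value $C = 1/\absf{\det(\mathbf{D})}$.
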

\begin{proof}
The Gramian matrix $G_{\Psi}$ can be written as 
\begin{equation*}
G_\Psi(\bm{\omega}) 
= 
\abs{\det(\mathbf{D})} \mathbf{H}_0^* \mathcal{D}(\bm{\omega}) \mathbf{H}_0,
\end{equation*}
where $\mathcal{D}(\bm{\omega})$ is the  $\abs{\det(\mathbf{D})} \times \abs{\det(\mathbf{D})}$ diagonal matrix with entry $c(m;\bm{\omega})$ in column $m$:
\begin{align*}
\abs{\det(\mathbf{D})} \mathbf{H}_0^* \mathcal{D}(\bm{\omega}) \mathbf{H}_0 
&= 
\left( e^{-2\pi i\bm{e}_k\cdot (\mathbf{D}^T)^{-1}\bm{e}_n} \right)_{k,n} 
\left(c(m;\bm{\omega})\right)_{n,m} 
\left( e^{2\pi i\bm{e}_l\cdot (\mathbf{D}^T)^{-1}\bm{e}_m} \right)_{m,l} \\
&= 
\left( c(m;\bm{\omega})e^{-2\pi i\bm{e}_k\cdot (\mathbf{D}^T)^{-1}\bm{e}_m} \right)_{k,m} 
\left( e^{2\pi i\bm{e}_l\cdot (\mathbf{D}^T)^{-1}\bm{e}_m} \right)_{m,l} \\
&= 
\left(\sum_{m=0}^{\abs{\det(\mathbf{D})}-1}  c(m;\bm{\omega})e^{-2\pi i (\bm{e}_k -\bm{e}_l )\cdot (\mathbf{D}^T)^{-1}\bm{e}_m}  \right)_{k,l}.
\end{align*}
Since $\mathcal{D}(\bm{\omega})$ has non-negative entries, we write this as
\begin{equation*}
G_\Psi(\bm{\omega}) 
= 
\abs{\det(\mathbf{D})} (\mathcal{D}(\bm{\omega})^{1/2} \mathbf{H}_0)^* (\mathcal{D}(\bm{\omega})^{1/2} \mathbf{H}_0).
\end{equation*}

Now consider the quadratic form
\begin{align*}
\bm{\alpha}^*G_\Psi(\bm{\omega})\bm{\alpha} 
&= 
\abs{\det(\mathbf{D})} (\mathcal{D}(\bm{\omega})^{1/2} \mathbf{H}_0\bm{\alpha})^* (\mathcal{D}(\bm{\omega})^{1/2} \mathbf{H}_0\bm{\alpha})\\
&= 
\abs{\det(\mathbf{D})} \abs{\mathcal{D}(\bm{\omega})^{1/2} \mathbf{H}_0\bm{\alpha}}^2,
\end{align*}
where $\bm{\alpha}\in\mathbb{C}^{\abs{\det(\mathbf{D})}-1}$.
Since $\mathbf{H}_0$ is an isometry, $\abs{\mathbf{H}_0 \bm{\alpha}}=\abs{\bm{\alpha}}$, and we immediately verify {\it (i)}.

To prove {\it (ii)}, we first identify the range of $\mathbf{H}_0$.  By the Fredholm alternative, a vector is in the range of $\mathbf{H}_0$ if and only if it is orthogonal to the null space of $\mathbf{H}_0^*$.  Since $\mathbf{H}^*$ is a unitary matrix and its first row is a constant multiple of $(1,1,\dots,1)^T$, the range of $\mathbf{H}_0$ consists of vectors that are orthogonal to $(1,1,\dots,1)^T$. Therefore 
\begin{align*}
\lambda(\bm{\omega}) 
& = 
\abs{\det(\mathbf{D})}  \min_{\substack{\bm{\alpha}\in\mathbb{C}^{\abs{\det(\mathbf{D})}-1} \\ \abs{\bm{\alpha}}=1}} \abs{\mathcal{D}(\bm{\omega})^{1/2} \mathbf{H}_0\bm{\alpha}}^2\\
& =
\abs{\det(\mathbf{D})} \min_{\substack{\bm{\alpha}\in\mathbb{C}^{\abs{\det(\mathbf{D})}} \\ \abs{\bm{\alpha}}=1 \\ \bm{\alpha}\perp (1,1,\dots,1)^T}} \abs{\mathcal{D}(\bm{\omega})^{1/2} \bm{\alpha}}^2 \\
& = 
\abs{\det(\mathbf{D})} \min_{\substack{\bm{\alpha}\in\mathbb{C}^{\abs{\det(\mathbf{D})}} \\ \abs{\bm{\alpha}}=1 \\ \bm{\alpha}\perp (1,1,\dots,1)^T}} \sum_{m\in \mathcal{M}} \abs{\alpha_m}^2 c(m;\bm{\omega}),
\end{align*}
where in the last equation, we use the notation $\bm{\alpha} = (\alpha_0,\dots,\alpha_{\abs{\det(\mathbf{D})}-1})$.
Then a lower bound is given by
\begin{align*}
\lambda(\bm{\omega}) 
&\geq 
\abs{\det(\mathbf{D})} \max_{m_0(\bm{\omega}) \in\mathcal{M} } \min_{\substack{\bm{\alpha}\in\mathbb{C}^{\abs{\det(\mathbf{D})}} \\ \abs{\bm{\alpha}}=1 \\ \bm{\alpha}\perp (1,1,\dots,1)^T}} \sum_{m\in \mathcal{M}\setminus \{m_0(\bm{\omega})\} } \abs{\alpha_m}^2 c(m;\bm{\omega}) \\
&\geq 
\abs{\det(\mathbf{D})} \max_{m_0(\bm{\omega}) \in\mathcal{M} } \min_{\substack{\bm{\alpha}\in\mathbb{C}^{\abs{\det(\mathbf{D})}} \\ \abs{\bm{\alpha}}=1 \\ \bm{\alpha}\perp (1,1,\dots,1)^T}} 
\left( \min_{m\in \mathcal{M} \setminus \{m_0(\bm{\omega})\}} c(m;\bm{\omega})\right)
\sum_{m\in \mathcal{M}\setminus \{m_0(\bm{\omega})\} } \abs{\alpha_m}^2 \\
& = 
\abs{\det(\mathbf{D})} \max_{m_0(\bm{\omega}) \in\mathcal{M} }
\left(\min_{m\in \mathcal{M} \setminus \{m_0(\bm{\omega})\}} c(m;\bm{\omega})\right)
\left( \min_{\substack{\bm{\alpha}\in\mathbb{C}^{\abs{\det(\mathbf{D})}} \\ \abs{\bm{\alpha}}=1 \\ \bm{\alpha}\perp (1,1,\dots,1)^T}} \sum_{m\in \mathcal{M}\setminus \{m_0(\bm{\omega})\} } \abs{\alpha_m}^2 \right).
\end{align*}

Notice that none of the standard unit vectors
\begin{equation*}
\{(1,0,\dots,0),(0,1,0,\dots,0),\dots,(0,\dots,0,1)\}
\end{equation*} 
are orthogonal to $(1,1,\dots,1)^T$, so there is a constant $C>0$ such that 
\begin{equation*}
\min_{m_0(\bm{\omega})\in \mathcal{M}}\min_{\substack{\bm{\alpha}\in\mathbb{C}^{\abs{\det(\mathbf{D})}} \\ \abs{\bm{\alpha}}=1 \\ \bm{\alpha}\perp (1,1,\dots,1)^T}} \sum_{m\in\mathcal{M}\setminus \{m_0(\bm{\omega})\}}\abs{\alpha_m}^2 
=
C.
\end{equation*}
We now use this constant to provide a lower bound for $\lambda(\bm{\omega})$: 
\begin{equation*}
\lambda(\bm{\omega}) 
\geq C 
\abs{\det(\mathbf{D})} \max_{m_0(\bm{\omega}) \in\mathcal{M} }\min_{m\in \mathcal{M} \setminus \{m_0(\bm{\omega})\}} c(m;\bm{\omega}).
\end{equation*}

Finally, for {\it (iii)}, suppose that there are distinct $m_1(\bm{\omega}),m_2(\bm{\omega})\in \mathcal{M}$ such that $c(m_1(\bm{\omega});\bm{\omega})=c(m_2(\bm{\omega});\bm{\omega})=0$.  Then define the vector $\bm{\alpha}=(\alpha_1,\dots,\alpha_{\abs{\det(\mathbf{D})}-1})\in \mathbb{C}^{\abs{\det(\mathbf{D})}}$ such that $\alpha_{m_1}=1/\sqrt{2}$, $\alpha_{m_2}=-1/\sqrt{2}$, and all other entries are zero. This vector is in the range of $\mathbf{H}_0$, and $\mathcal{D}(\bm{\omega})^{1/2} \bm{\alpha}=0$. Therefore
$\lambda(\bm{\omega})=0$. 
\end{proof}
\begin{lemma}\label{lem:rb}
The collection $\Psi$ generates a Riesz basis if and only if no two of the functions $c(m;\bm{\omega})$ are zero for the same $\bm{\omega}$. 
\end{lemma}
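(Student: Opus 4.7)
The plan is to combine the eigenvalue bounds from Lemma \ref{lem:eig_values} with the Ron--Shen characterization recalled earlier: $\Psi$ generates a Riesz basis if and only if $\Lambda$ is essentially bounded above and $1/\lambda$ is essentially bounded above. The two implications will be handled separately, using part (iii) (together with a continuity argument) for necessity and part (ii) for sufficiency.

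For the necessity ($\Rightarrow$), I would argue the contrapositive. Suppose there exist $\bm{\omega}_0 \in \mathbb{R}^d$ and distinct $m_1,m_2 \in \mathcal{M}$ with $c(m_1;\bm{\omega}_0)=c(m_2;\bm{\omega}_0)=0$. The construction in the proof of Lemma \ref{lem:eig_values}(iii) produces a unit vector $\bm{\alpha}$ in the range of $\mathbf{H}_0$ supported on coordinates $m_1,m_2$ with $\mathcal{D}(\bm{\omega}_0)^{1/2}\bm{\alpha}=0$. The key step is to observe that each $c(m;\cdot)$ is continuous in $\bm{\omega}$: the lattice sum in \eqref{eq:rieszbds_cmw} converges locally uniformly because $\widehat{\psi}_{j+1}$ is a bounded periodic multiple of $\widehat{\varphi}_j$, which in turn enjoys the decay $|\widehat{\varphi}_j(\bm{\omega})| \le C|\widehat{L}_{\mathrm{d},j}(\bm{\omega})||\bm{\omega}|^{-r}$ with $r>d/2$ coming from the admissibility bound on $\widehat{L}^{-1}$. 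Hence for arbitrary $\epsilon>0$ there is a neighborhood $U_\epsilon$ of $\bm{\omega}_0$ on which both $c(m_1;\bm{\omega})$ and $c(m_2;\bm{\omega})$ are smaller than $\epsilon$. Evaluating the quadratic form on the same $\bm{\alpha}$ then yields $\lambda(\bm{\omega}) \le \bm{\alpha}^*G_\Psi(\bm{\omega})\bm{\alpha} \le \epsilon \,|{\det}(\mathbf{D})|$ throughout $U_\epsilon$, a set of positive measure, contradicting the essential boundedness of $1/\lambda$.

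For the sufficiency ($\Leftarrow$), I would set
\begin{equation*}
g(\bm{\omega}) := \max_{m_0 \in \mathcal{M}} \min_{m \in \mathcal{M}\setminus\{m_0\}} c(m;\bm{\omega}).
\end{equation*}
The hypothesis says that for every $\bm{\omega}$ at most one of the numbers $c(m;\bm{\omega})$ vanishes, so one may pick $m_0$ to exclude that index (or any index if all are positive), making the remaining minimum strictly positive; thus $g(\bm{\omega})>0$ everywhere. Each $c(m;\cdot)$ is continuous (as above) and periodic with respect to $2\pi(\mathbf{D}^T)^{-j}\mathbb{Z}^d$, hence $g$ is a continuous periodic function that attains its minimum on a compact fundamental domain, giving a uniform positive lower bound. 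Feeding this into Lemma \ref{lem:eig_values}(ii) furnishes the required essential lower bound on $\lambda$. The essential upper bound on $\Lambda$ follows from Lemma \ref{lem:eig_values}(i), noting that $\max_m c(m;\bm{\omega})$ is continuous and periodic, hence bounded on a fundamental domain.

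The main obstacle is the technical point of continuity of $c(m;\bm{\omega})$. The expression \eqref{eq:oplikewavfourier} makes $\widehat{\psi}_{j+1}$ a quotient whose denominator is bounded away from zero by $A_j$ (the lower Riesz bound) and whose numerator is the product of a bounded periodic $\widehat{L}_{\mathrm{d},j}$ with $\widehat{\varphi}_j$; combined with the $|\bm{\omega}|^{-r}$-type decay at infinity this gives a uniform summable majorant on any compact set, so a Weierstrass $M$-test pushes through. Once continuity is in hand, the two reductions to compact fundamental domains are routine and the equivalence follows.
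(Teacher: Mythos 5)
Your overall architecture matches the paper's: sufficiency via Lemma \ref{lem:eig_values}(ii) plus compactness of a fundamental domain, necessity via the degenerate vector of Lemma \ref{lem:eig_values}(iii). The one step that does not go through as written is the continuity of $c(m;\cdot)$. Your justification handles the tail of the lattice sum (the decay $\absf{\widehat{L}(\bm{\omega})}^{-1}\leq C\absf{\bm{\omega}}^{-r}$ does give a locally uniform summable majorant), but it says nothing about the continuity of the individual terms $\absf{\widehat{\varphi}_j}^2$. The hypotheses only give $\widehat{L}=f/g$ with $f,g$ continuous, so $\widehat{\varphi}_j=\widehat{L}_{\mathrm{d},j}\,g/f$ is continuous away from the zeros of $f$; at a zero of $\widehat{L}$ the Riesz condition forces $\widehat{\varphi}_j$ to be \emph{bounded} there, but boundedness of a quotient does not yield continuity, so both your sufficiency argument (minimum of a continuous $g(\bm{\omega})$ on a compact set) and, as literally stated, your necessity argument rest on an unproved claim. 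The paper avoids this entirely by using \eqref{eq:coeff_fcn_rb} to sandwich the denominator between $A_j$ and $B_j$ and then transferring all continuity and positivity questions to the numerator $\absf{\widehat{L}_{\mathrm{d},j}(\cdot+2\pi(\mathbf{D}^T)^{-j-1}\bm{e}_m)}^2$, which is unambiguously continuous because $p_j\in\ell_1$ makes $\widehat{L}_{\mathrm{d},j}$ an absolutely convergent Fourier series; the function $F$ in \eqref{eq:cont_lower_bnd} built from these numerators is continuous and positive on $\overline{\Omega}_j$, and the bound $c(m;\bm{\omega})\geq\abs{\det(\mathbf{D})}^{j-1}B_j^{-1}\absf{\widehat{L}_{\mathrm{d},j}(\cdot)}^2$ converts its minimum into the required lower bound on \eqref{eq:riesz_uniform_fd}. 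You should repair your proof the same way: replace ``$c(m;\cdot)$ is continuous'' by the two-sided estimate in terms of $\absf{\widehat{L}_{\mathrm{d},j}}^2$.

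On the necessity direction your instinct is actually sharper than the paper's one-line appeal to part (iii): $\lambda(\bm{\omega}_0)=0$ at a single point does not by itself contradict essential boundedness of $1/\lambda$, and a neighborhood of positive measure is indeed needed. That neighborhood argument survives the repair above, since there you only need an \emph{upper} bound, namely $c(m_i;\bm{\omega})\leq\abs{\det(\mathbf{D})}^{j-1}A_j^{-1}\absf{\widehat{L}_{\mathrm{d},j}(\bm{\omega}+2\pi(\mathbf{D}^T)^{-j-1}\bm{e}_{m_i})}^2$, and the right-hand side is continuous and vanishes at $\bm{\omega}_0$. With that substitution your proof is complete and, modulo the bookkeeping, coincides with the paper's.
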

\begin{proof}
 Let $\Omega_j$ be a fundamental domain for the lattice $2\pi (\mathbf{D}^T)^{-j}\mathbb{Z}^d$, and let $\overline{\Omega}_j$ denote its closure.

For the reverse direction, we must show that there is a uniform lower bound of $\lambda(\bm{\omega})$ over $\overline{\Omega}_j$.  
By Lemma \ref{lem:eig_values}, it suffices to provide a lower bound for
\begin{equation}\label{eq:riesz_uniform_fd}
\max_{m_0(\bm{\omega}) \in \mathcal{M}}  \min_{m\in \mathcal{M}\setminus \{m_0(\bm{\omega})\}} c(m;\bm{\omega}).
\end{equation}
Based on \eqref{eq:oplikewavfourier} and \eqref{eq:rieszbds_cmw}, we verify that  
\begin{equation}\label{eq:coeff_fcn_rb}
c(m;\bm{\omega}) 
= 
\abs{\det(\mathbf{D})}^{j-1}  \frac{\abs{ \widehat{L}_{\mathrm{d},j}(\bm{\omega}+2\pi  (\mathbf{D}^T)^{-j-1} \bm{e}_m)}^2}{\sum_{\bm{k}\in \mathbb{Z}^d} \abs{\widehat{\varphi}_j(\bm{\omega}+2\pi  (\mathbf{D}^T)^{-j-1} \bm{e}_m+2\pi  (\mathbf{D}^T)^{-j} \bm{k})}^2}.
\end{equation}
Note that the numerator is a continuous function, and the denominator is bounded away from zero, due to the Riesz basis condition on $\varphi_j$. Hence, $c(m;\bm{\omega})=0$ at a point $\bm{\omega}$ if and only if  $\widehat{L}_{\mathrm{d},j}(\bm{\omega}+2\pi  (\mathbf{D}^T)^{-j-1} \bm{e}_m)=0$.
Let us define the continuous function
\begin{equation}\label{eq:cont_lower_bnd}
F(\bm{\omega}) 
:= 
\max_{m_0(\bm{\omega}) \in \mathcal{M}}  \min_{m\in \mathcal{M}\setminus \{m_0(\bm{\omega})\}} \abs{ \widehat{L}_{\mathrm{d},j}(\bm{\omega}+2\pi  (\mathbf{D}^T)^{-j-1} \bm{e}_m)}^2.
\end{equation}
Since no two functions $c(m;\bm{\omega})$ are zero at any point $\bm{\omega}$, $F$ is positive on $\overline{\Omega}_j$. Due to the compactness of this set, there is a constant $C>0$ such that $F(\bm{\omega})>C$ on $\overline{\Omega}_j$. Since $F$ is bounded away from zero,  \eqref{eq:riesz_uniform_fd} is as well.

The forward direction follows from {\it (iii)} of Lemma \ref{lem:eig_values}. 
\end{proof}
\begin{lemma}\label{lem:rb_w}
If $\Psi_{j+1}$ generates a Riesz basis, then it provides a Riesz basis for  $W_{j+1}$.
\end{lemma}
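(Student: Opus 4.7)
The plan is to prove, via a range-function (fiber-dimension) argument in the Ron--Shen framework \cite{ron95}, that the closed $\mathbf{D}^{j+1}\mathbb{Z}^d$-shift-invariant span $S$ of $\Psi_{j+1}$ coincides with $W_{j+1}$. Once $S\subseteq W_{j+1}$ is in hand, equality will follow from matching fiber dimensions almost everywhere, and then the Riesz basis property of $\Psi_{j+1}$ for $W_{j+1}$ is immediate from the hypothesis combined with Lemma \ref{lem:rb}.

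First, I would observe using Proposition \ref{prop:orthog_v_w} that every $\mathbf{D}^{j+1}\mathbb{Z}^d$-shift of each $\psi_{j+1}^{(l)}$ lies in $V_j$ and is orthogonal to $V_{j+1}$, so $S\subseteq W_{j+1}$. By the hypothesis and Lemma \ref{lem:rb}, the $\mathbf{D}^{j+1}\mathbb{Z}^d$-shifts of $\Psi_{j+1}$ already form a Riesz basis of $S$, hence only the equality $S=W_{j+1}$ remains. To count fibers of $V_j$ with respect to the refined lattice, I would invoke the coset decomposition

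\begin{equation*}
\mathbf{D}^j\mathbb{Z}^d
=
\bigcup_{l=0}^{\abs{\det(\mathbf{D})}-1}\bigl(\mathbf{D}^j\bm{e}_l+\mathbf{D}^{j+1}\mathbb{Z}^d\bigr)
\end{equation*}

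to rewrite every element of $V_j$ as a $\mathbf{D}^{j+1}\mathbb{Z}^d$-combination of the $\abs{\det(\mathbf{D})}$ generators $\{\varphi_j(\cdot-\mathbf{D}^j\bm{e}_l)\}_{l=0}^{\abs{\det(\mathbf{D})}-1}$. The reindexing $c[\bm{e}_l+\mathbf{D}\bm{n}]\mapsto c_l[\bm{n}]$ is an $\ell_2$-isometry, so the Riesz basis property of $\varphi_j$ under $\mathbf{D}^j\mathbb{Z}^d$-shifts transfers verbatim, and $V_j$ has range function dimension exactly $\abs{\det(\mathbf{D})}$ almost everywhere when regarded as a $\mathbf{D}^{j+1}\mathbb{Z}^d$-shift-invariant space.

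Combining this with the fact that $V_{j+1}$ has range function dimension $1$, and using that orthogonal complementation is compatible with fiberwise complementation for shift-invariant subspaces, $W_{j+1}$ has range function dimension $\abs{\det(\mathbf{D})}-1$ almost everywhere. By hypothesis together with Lemma \ref{lem:eig_values}(i)--(ii), the Gramian $G_{\Psi}(\bm{\omega})$ is bounded above and below away from zero a.e., hence nonsingular a.e., so $S$ also has range function dimension $\abs{\det(\mathbf{D})}-1$. Two closed shift-invariant subspaces in the inclusion $S\subseteq W_{j+1}$ with equal finite fiber dimension must agree, giving $S=W_{j+1}$. The main obstacle I anticipate is the bookkeeping required when passing from the coarser lattice $\mathbf{D}^j\mathbb{Z}^d$ to the refined lattice $\mathbf{D}^{j+1}\mathbb{Z}^d$: one has to verify carefully that the $\abs{\det(\mathbf{D})}$-fold generating system for $V_j$ inherits two-sided Riesz bounds from $\varphi_j$, so that the fiber dimension is exactly $\abs{\det(\mathbf{D})}$ and not a priori smaller; everything else reduces to the dimension count already licensed by the Ron--Shen framework invoked earlier in this section.
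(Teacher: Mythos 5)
Your proof is correct and is essentially the paper's own argument: both rest on the same generator/fiber-dimension count for finitely generated shift-invariant spaces over the refined lattice $\mathbf{D}^{j+1}\mathbb{Z}^d$ (the Ron--Shen/de Boor--DeVore--Ron invariance of the number of Riesz generators), combined with the orthogonality from Proposition \ref{prop:orthog_v_w} and the observation that $V_j$ is generated over that lattice by the $\abs{\det(\mathbf{D})}$ coset translates of $\varphi_j$. The only cosmetic difference is that the paper adjoins $\varphi_{j+1}$ to $\Psi_{j+1}$ and concludes that the resulting $\abs{\det(\mathbf{D})}$-element family must generate all of $V_j$, whereas you complement fiberwise and match the dimension $\abs{\det(\mathbf{D})}-1$ of $W_{j+1}$ directly.
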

\begin{proof}
We verify this fact by comparing the bases
\begin{align*}
\Psi_{j+1}' 
& :=
\{\varphi_{j+1}\} \bigcup \Psi_{j+1}, \\
\Phi_{j} 
& :=
\{\varphi_{j}(\cdot-\mathbf{D}^{j}\bm{e}_m)\}_{m\in\mathcal{M}}
\end{align*}
for shift invariant spaces on the lattice $\mathbf{D}^{j+1}\mathbb{Z}^d$.
The $\mathbf{D}^{j}\mathbb{Z}^d$ shifts of $\varphi_j$ are a Riesz basis for $V_j$, or, equivalently, the $\mathbf{D}^{j+1}\mathbb{Z}^d$ shifts of the elements of $\Phi_{j}$ are a Riesz basis of $V_j$. This basis has $\abs{\det(\mathbf{D})}$ elements, and any other basis must have the same number of elements.

The collections $\Psi_{j+1}$ and $\{\varphi_{j+1}\}$ generate Riesz bases, and both are contained in $V_j$. These bases are orthogonal, as was shown in Proposition \ref{prop:orthog_v_w}. Therefore $\Psi_{j+1}'$ generates a Riesz basis for a subspace of $V_{j}$, and $\Psi_{j+1}$ generates a Riesz basis for a subspace of $W_{j+1}$. The fact that $\Psi_{j+1}'$ has $\abs{\det(\mathbf{D})}$ elements implies that $\Psi_{j+1}'$ provides a Riesz basis for $V_j$ (cf. \cite[Theorem 2.26]{deboor93} and \cite{aldroubi95}), and the result follows. 
\end{proof}

In Lemmas \ref{lem:eig_values} and \ref{lem:rb}, we saw how $\Psi$ generating a Riesz basis depends on the zeros of the functions $c(m;\cdot)$.  From \eqref{eq:coeff_fcn_rb}, it is clear that the zeros of $c(m;\cdot)$ coincide with the zeros of a shifted version of 
$\widehat{L}_{\mathrm{d},j}$.  In order to interpret the Riesz basis conditions in terms of the operator $\Lop$, we note that the zeros of $\widehat{L}_{\mathrm{d},j}$ are precisely the periodized zeros of $\widehat{L}$.  

Let us denote the zero set of the symbol $\widehat{L}$ as 
\begin{equation*}
\mathcal N 
:=
\{\bm{p}\in{\mathbb R^d} : \widehat{L}(\bm{p})=0\},
\end{equation*}
and for each scale $j$ and each $m=0,\dots, \abs{\det(\mathbf{D})}-1$, let us define the periodized sets
\begin{equation*}
\mathcal{N}_j^{(m)} 
:= 
\left\{ \bm{p}-2\pi (\mathbf{D}^T)^{-j-1} \bm{e}_m+2\pi (\mathbf{D}^T)^{-j}\bm{k}  : \bm{p}\in \mathcal{N}, \bm{k}\in \mathbb{Z}^d \right\}.
\end{equation*}
Note that $\mathcal{N}_j^{(m)}$ is the zero set of $\widehat{L}_{\mathrm{d},j}(\cdot + 2\pi (\mathbf{D}^T)^{-j-1} \bm{e}_m )$, and hence it is also the zero set of $c(m,\bm{\omega})$. 
\begin{theorem}\label{th:rb}
Let $j\in \mathbb{Z}$ be an arbitrary scale. Then the family of functions 
\begin{equation*}
\Psi_{j+1}
=
\left\{\psi^{(m)}_{j+1}\right\}_{m=1}^{\abs{\det(\mathbf{D})}-1}
\end{equation*} 
generates a Riesz basis of $W_{j+1}$ if and only if the sets $\mathcal N_j^{(m)}$ satisfy
\begin{equation}\label{eq:suffcond}
\mathcal N_j^{(0)}\cap\mathcal N_j^{(m)}
=
\emptyset
\end{equation}
for each $1\leq m\leq \abs{\det(\mathbf{D})}-1$.
\end{theorem}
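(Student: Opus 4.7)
The plan is to combine the results that have already been assembled. By Lemma \ref{lem:rb_w}, it suffices to prove that $\Psi_{j+1}$ generates a Riesz basis (as a shift-invariant system) if and only if the stated intersection condition holds, since Lemma \ref{lem:rb_w} will then promote this to a Riesz basis of $W_{j+1}$. Lemma \ref{lem:rb} already characterizes the Riesz basis property in terms of the scalar functions $c(m;\bm{\omega})$: no two of them may vanish simultaneously at any $\bm{\omega}$. So the real content of the theorem is to translate this analytic condition into the set-theoretic condition involving $\mathcal{N}_j^{(0)}\cap \mathcal{N}_j^{(m)}$.

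First I would identify the zero set of $c(m;\cdot)$ with $\mathcal{N}_j^{(m)}$. This is already observed in the paragraph preceding the theorem: formula \eqref{eq:coeff_fcn_rb} exhibits $c(m;\bm{\omega})$ as a positive constant multiple of $\abs{\widehat{L}_{\mathrm{d},j}(\bm{\omega}+2\pi(\mathbf{D}^T)^{-j-1}\bm{e}_m)}^2$ divided by the Riesz-bounded denominator, and $\widehat{L}_{\mathrm{d},j}$ has the same zero set as the periodization of $\widehat{L}$ by $2\pi(\mathbf{D}^T)^{-j}\mathbb{Z}^d$. Hence $c(m;\bm{\omega})=0$ precisely when $\bm{\omega}\in\mathcal{N}_j^{(m)}$. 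By Lemma \ref{lem:rb}, the Riesz basis property for $\Psi_{j+1}$ is therefore equivalent to
\begin{equation*}
\mathcal{N}_j^{(m_1)}\cap \mathcal{N}_j^{(m_2)}=\emptyset \quad \text{for all } m_1\neq m_2 \text{ in } \mathcal{M}.
\end{equation*}

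The nontrivial step is to reduce this ``all pairs'' condition to the ``with $0$'' condition stated in \eqref{eq:suffcond}. The idea is to exploit the translation structure of the sets $\mathcal{N}_j^{(m)}$: each one is a shift of $\mathcal{N}_j^{(0)}$ by $-2\pi(\mathbf{D}^T)^{-j-1}\bm{e}_m$. Thus
\begin{equation*}
\mathcal{N}_j^{(m_1)}\cap\mathcal{N}_j^{(m_2)}
=
\bigl(\mathcal{N}_j^{(0)}-2\pi(\mathbf{D}^T)^{-j-1}\bm{e}_{m_1}\bigr)\cap\bigl(\mathcal{N}_j^{(0)}-2\pi(\mathbf{D}^T)^{-j-1}\bm{e}_{m_2}\bigr),
\end{equation*}
which after translating by $2\pi(\mathbf{D}^T)^{-j-1}\bm{e}_{m_2}$ becomes $\mathcal{N}_j^{(0)}\cap\bigl(\mathcal{N}_j^{(0)}-2\pi(\mathbf{D}^T)^{-j-1}(\bm{e}_{m_1}-\bm{e}_{m_2})\bigr)$. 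Writing $\bm{e}_{m_1}-\bm{e}_{m_2}=\bm{e}_{m_3}+\mathbf{D}\bm{n}$ with $m_3\in\mathcal{M}$ and $\bm{n}\in\mathbb{Z}^d$ (using that $\{\bm{e}_m\}$ is a complete set of coset representatives of $\mathbb{Z}^d/\mathbf{D}\mathbb{Z}^d$), and using that $\mathbf{D}\mathbb{Z}^d=\mathbf{D}^T\mathbb{Z}^d$ for our class of dilation matrices, the extra term $2\pi(\mathbf{D}^T)^{-j-1}\mathbf{D}\bm{n}$ lies in $2\pi(\mathbf{D}^T)^{-j}\mathbb{Z}^d$, which is precisely the period lattice of $\mathcal{N}_j^{(0)}$. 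Consequently
\begin{equation*}
\mathcal{N}_j^{(m_1)}\cap\mathcal{N}_j^{(m_2)}
=\mathcal{N}_j^{(0)}\cap\mathcal{N}_j^{(m_3)},
\end{equation*}
with $m_3\neq 0$ whenever $m_1\neq m_2$, and conversely every $m\in\{1,\dots,\abs{\det(\mathbf{D})}-1\}$ arises this way (e.g.\ with $m_1=m$, $m_2=0$). Hence the ``all pairs'' condition collapses to \eqref{eq:suffcond}.

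With these two reductions in hand, the proof is assembled as follows: the Riesz basis property of $\Psi_{j+1}$ is equivalent via Lemma \ref{lem:rb} to the disjointness of the zero sets $\mathcal{N}_j^{(m)}$ pairwise, which by the translation argument is equivalent to \eqref{eq:suffcond}; Lemma \ref{lem:rb_w} then upgrades the Riesz basis to a Riesz basis of $W_{j+1}$. The main technical obstacle is the translation/coset bookkeeping described above; it hinges on the fact, recorded in Section \ref{sec:prelim}, that $\mathbf{D}$ and $\mathbf{D}^T$ generate the same integer lattice, which is what makes the shifts of $\mathcal{N}_j^{(0)}$ stay compatible with its periodicity.
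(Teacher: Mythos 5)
Your proposal is correct and follows essentially the route the paper intends: the paper states Theorem \ref{th:rb} without an explicit proof, treating it as an immediate consequence of Lemmas \ref{lem:rb} and \ref{lem:rb_w} together with the observation that $\mathcal{N}_j^{(m)}$ is the zero set of $c(m;\cdot)$. The one step the paper leaves implicit --- reducing the all-pairs disjointness condition of Lemma \ref{lem:rb} to disjointness with $\mathcal{N}_j^{(0)}$ alone --- you supply correctly via the translation/coset argument and the fact that $\mathbf{D}\mathbb{Z}^d=\mathbf{D}^T\mathbb{Z}^d$ for this class of dilation matrices.
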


Our wavelet construction is intended to be general so that we may account for a large collection of operators.  As a consequence of this generality, we cannot conclude that our wavelet construction always produces a Riesz basis of $L_2(\mathbb{R}^d)$. Here, we impose additional conditions on $\Lop$ and multiply $\psi_{j+1}$ by an appropriate normalization factor to ensure that a Riesz basis is produced. In order to preserve generality, we focus on the fine scale wavelet spaces and include a multiresolution space $V_{j_0+1}$ in our Riesz basis.
\begin{theorem}\label{th:rb2}
Let $\Lop$ be a spline admissible operator 
of order $r$, and suppose that there exist $\omega_0>0$ and constants $C_1,C_2>0$ such that the symbol $\widehat{L}$ satisfies 
\begin{equation*}
C_1 \abs{\bm{\omega}}^{r} 
\leq 
\abs{\widehat{L}(\bm{\omega})} 
\leq 
C_2 \abs{\bm{\omega}}^r
\end{equation*}
for $\abs{\bm{\omega}} \geq \omega_0$.  Then there is an integer $j_0$ such that the collection
\begin{equation}\label{eq:rb}
\left\{\varphi_{j_0+1}(\cdot-\beta)\right\}_{\beta \in \mathbf{D}^{j_0+1}\mathbb{Z}^d}   \bigcup_{j\leq j_0} 
\left\{ \abs{\det(\mathbf{D})}^{(r/d-1/2)j}\psi_{j+1}(\cdot-\beta)\right\}_{\beta \in \mathbf{D}^{j}\mathbb{Z}^d\backslash\mathbf{D}^{j+1}\mathbb{Z}^d }
\end{equation} 
forms a Riesz basis of $L_2(\mathbb{R}^d)$. 
\end{theorem}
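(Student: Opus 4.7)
The plan is to assemble the result from three ingredients: choose $j_0$ so small that Theorem \ref{th:rb} delivers a Riesz basis of each wavelet space $W_{j+1}$ for $j\le j_0$; use the MRA to decompose $L_2(\mathbb{R}^d)$ orthogonally as $V_{j_0+1}\oplus\bigoplus_{j\le j_0}W_{j+1}$; and verify that the scale-dependent renormalization $|\det(\mathbf{D})|^{(r/d-1/2)j}$ makes the per-scale Riesz constants uniform in $j$. Together with the orthogonality of the pieces, these will imply that \eqref{eq:rb} is a Riesz basis of $L_2(\mathbb{R}^d)$.

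For the first ingredient, the lower power-law bound forces the zero set $\mathcal{N}$ of $\widehat{L}$ to lie inside $\{|\bm{\omega}|\le\omega_0\}$, so $\mathcal{N}$ is bounded. A common point of $\mathcal{N}_j^{(0)}$ and $\mathcal{N}_j^{(m)}$ with $m\ne 0$ would yield $\bm{p}_1-\bm{p}_2=2\pi(\mathbf{D}^T)^{-j-1}(\mathbf{D}^T\bm{k}-\bm{e}_m)$ for some $\bm{p}_i\in\mathcal{N}$ and $\bm{k}\in\mathbb{Z}^d$; since $\bm{e}_m\notin\mathbf{D}^T\mathbb{Z}^d$, the right-hand side has norm at least of order $a^{-j-1}$. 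Choosing $j_0$ so that this minimum nonzero shift exceeds $2\omega_0$ makes $\mathcal{N}_j^{(0)}\cap\mathcal{N}_j^{(m)}=\emptyset$ for every $j\le j_0$ and every $m\ne 0$, and Theorem \ref{th:rb} together with Lemma \ref{lem:rb_w} then produces a Riesz basis $\Psi_{j+1}$ of $W_{j+1}$ for every such $j$. Iterating $V_{j-1}=V_j\oplus W_j$ and invoking the density of $\bigcup_j V_j$ in $L_2(\mathbb{R}^d)$ yields the orthogonal decomposition $L_2(\mathbb{R}^d)=V_{j_0+1}\oplus\bigoplus_{j\le j_0}W_{j+1}$, and the $\mathbf{D}^{j_0+1}\mathbb{Z}^d$-shifts of $\varphi_{j_0+1}$ are already a Riesz basis of $V_{j_0+1}$ by spline-admissibility.

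What remains is to show that the renormalized collections $\{|\det(\mathbf{D})|^{(r/d-1/2)j}\psi_{j+1}(\cdot-\beta)\}$ have Riesz constants that are bounded above and away from zero uniformly in $j\le j_0$. By Lemma \ref{lem:eig_values} it suffices to control, uniformly in $j$, $m$ and $\bm{\omega}$, the scaled Gramian entries $|\det(\mathbf{D})|^{(2r/d-1)j}c(m;\bm{\omega})$. Inserting \eqref{eq:oplikewavfourier} into \eqref{eq:rieszbds_cmw}, or equivalently using \eqref{eq:coeff_fcn_rb} together with the simplified formula for $\widehat{\phi}_j$ derived after \eqref{eq:lagint}, writes $c(m;\bm{\omega})$ explicitly as a ratio of shifted $|\widehat{L}|^{-2}$-sums over the lattice $2\pi(\mathbf{D}^T)^{-j}\mathbb{Z}^d$. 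Under the change of variable $\bm{\omega}=(\mathbf{D}^T)^{-j}\bm{\xi}$, the power-law assumption makes $\widehat{L}$ asymptotically homogeneous of degree $r$ so that these lattice sums converge (uniformly for $\bm{\xi}$ away from the rescaled zero set, thanks to $2r>d$) to sums associated with the homogeneous symbol $|\bm{\xi}|^r$. The prefactor $|\det(\mathbf{D})|^{(r/d-1/2)j}$ is precisely the $L_2$-volume correction that cancels the combined $a^{-jr}$ scaling of $|\widehat{L}|$ and the $a^{-jd/2}$ volume factor, leaving $j$-independent two-sided bounds on the scaled Gramian.

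The principal obstacle is this last uniform estimate. One must upgrade the pointwise disjointness from the first step to a quantitative positive lower bound on the shifted $\widehat{L}_{\mathrm{d},j}$-factors in \eqref{eq:coeff_fcn_rb}, and verify that the low-frequency remainder $|\bm{\omega}|\lesssim\omega_0$, where the power law may fail, contributes negligibly both to the upper Gramian bound (via $\max_m c(m;\bm{\omega})$) and to the max-min lower bound in Lemma \ref{lem:eig_values}\emph{(ii)} as $j\to-\infty$. Once the per-scale Riesz constants are shown to be uniform, the orthogonal direct sum assembles the pieces into a Riesz basis of $L_2(\mathbb{R}^d)$ via the standard principle that an orthogonal sum of Riesz bases with uniformly bounded constants is itself a Riesz basis of the sum, completing the proof.
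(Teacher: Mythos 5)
Your overall architecture is the same as the paper's: pick $j_0$ by comparing $\omega_0$ with the separation radius of the rescaled lattices, get a Riesz basis of each $W_{j+1}$ from the Gramian/fiberization machinery of Lemmas \ref{lem:eig_values}--\ref{lem:rb_w}, and assemble the orthogonal decomposition $L_2=V_{j_0+1}\oplus\bigoplus_{j\le j_0}W_{j+1}$ with the factor $\abs{\det(\mathbf{D})}^{(r/d-1/2)j}$ equalizing the per-scale Riesz constants. Your first step (the zero set $\mathcal N$ is confined to $\{\absf{\bm{\omega}}\le\omega_0\}$, while the minimal nonzero shift in $\mathcal N_j^{(0)}-\mathcal N_j^{(m)}$ grows like $a^{-j-1}$, so the periodized zero sets are disjoint for $j\le j_0$) is correct and consistent with the paper's choice $\omega_0<\tfrac{\pi}{4}\abs{\det(\mathbf{D})}^{-j_0/d}$.

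The genuine gap is the step you yourself flag as ``the principal obstacle'': the uniform two-sided bound $c(m;\bm{\omega})\asymp\abs{\det(\mathbf{D})}^{j-2rj/d}$ is asserted but not proved. Your proposed mechanism --- that after rescaling the lattice sums of $\absf{\widehat{L}}^{-2}$ ``converge'' to the corresponding sums for the homogeneous symbol $\absf{\bm{\xi}}^{r}$ --- is not the right statement and would not close the argument as written: what is needed is a two-sided bound valid uniformly over \emph{all} $j\le j_0$, all $m$, and all $\bm{\omega}$ (including the finitely many scales near $j_0$ and the points where a lattice translate falls inside the ball $\absf{\bm{\omega}}\le\omega_0$ or on the zero set of $\widehat{L}$), not an asymptotic limit as $j\to-\infty$; moreover a limit of the individual sums says nothing quantitative without a rate. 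The paper closes this with two elementary volume-comparison estimates (Propositions \ref{pr:lattice_lower} and \ref{pr:lattice_upper} in the appendix), which give $\sum_k\absf{\bm{x}_k}^{-2r}\gtrsim h_X^{-2r}$ and $\lesssim q_X^{-2r}$ for lattices avoiding a neighborhood of the origin; combined with $h_j,q_j\propto\abs{\det(\mathbf{D})}^{-j/d}$ and the two-sided power law this yields the bound $\eqref{eq:psi_sum_reduced}\asymp\abs{\det(\mathbf{D})}^{-2rj/d}$ with constants independent of $j$. The low-frequency exception is then handled structurally rather than by a ``negligibility'' estimate: for fixed $\bm{\omega}$ at most one $m$ has a lattice point within $q_j/2$ of the origin, that single $m$ is discarded by the max--min lower bound of Lemma \ref{lem:eig_values}(ii) (taking $0$ as its lower bound), and for the upper bound the extra near-origin term only increases the denominator $\sum_k\absf{\widehat{L}(\bm{x}_k)}^{-2}$ and hence decreases $c(m;\bm{\omega})$. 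You would need to supply these quantitative inputs to turn your outline into a proof.
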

\begin{proof}
Let $j_0$ be an integer for which $\omega_0<\pi/4 \abs{\det(\mathbf{D})}^{-j_0/d}$. Considering Lemma \ref{lem:eig_values}, the Riesz bounds for the wavelet spaces depend on the functions $c(m;\bm{\omega})$ of \eqref{eq:rieszbds_cmw}. A Fourier domain formula for the wavelet $\psi_{j+1}$ is
\begin{equation*}
\widehat{\psi}_{j+1} 
= 
\abs{ \text{det} (\mathbf{D}) }^j \frac{\widehat{L}(\bm{\omega})^{-1}}{\sum_{\bm{k}\in\mathbb{Z}^d} \abs{\widehat{L}(\bm{\omega}+2\pi (\mathbf{D}^T)^{-j}\bm{k})}^{-2}},
\end{equation*}
so we have
\begin{align*}
c(m;\bm{\omega}) 
&= 
\abs{\det(\mathbf{D})}^{-j-1}\sum_{\bm{\beta}\in 2\pi (\mathbf{D}^T)^{-j} \mathbb{Z}^d}  \abs{\widehat{\psi}_{j+1}(\bm{\omega}+2\pi  (\mathbf{D}^T)^{-j-1} \bm{e}_m+\bm{\beta})}^2\\
&= 
\abs{ \text{det} (\mathbf{D}) }^{j-1} \sum_{\bm{\beta}\in 2\pi (\mathbf{D}^T)^{-j} \mathbb{Z}^d} \abs{ \frac{\widehat{L}(\bm{\omega}+2\pi (\mathbf{D}^T)^{-j-1}\bm{e}_m +\bm{\beta})^{-1}}{\sum_{\bm{k}\in\mathbb{Z}^d} \abs{\widehat{L}(\bm{\omega}+2\pi (\mathbf{D}^T)^{-j-1}\bm{e}_m +2\pi (\mathbf{D}^T)^{-j}\bm{k})}^{-2}} }^2.
\end{align*}
We now need upper and lower bounds on the terms
\begin{equation}\label{eq:psi_sum}
\sum_{\bm{\beta}\in 2\pi (\mathbf{D}^T)^{-j} \mathbb{Z}^d}  \abs{\frac{\widehat{L}(\bm{\omega}+2\pi  (\mathbf{D}^T)^{-j-1} \bm{e}_m+\bm{\beta})^{-1}}{\sum_{\bm{k} \in \mathbb{Z}^d}\abs{\widehat{L}(\bm{\omega}+2\pi  (\mathbf{D}^T)^{-j-1} \bm{e}_m+2\pi (\mathbf{D}^T)^{-j}\bm{k})}^{-2}}}^2.
\end{equation}
Recall that the upper bound should be uniform across all values of $m$; however, for the lower bound, it is sufficient to consider only $\abs{\det(\mathbf{D})}-1$ of the functions $c(m;\bm{\omega})$.

Define the lattices
\begin{equation*}
X_j(m,\bm{\omega})
:=
\left\{ \bm{x}_{\bm{k}}=\bm{\omega}+2\pi  (\mathbf{D}^T)^{-j-1} \bm{e}_m+2\pi (\mathbf{D}^T)^{-j}\bm{k}: \bm{k}\in\mathbb{Z}^d \right\}.
\end{equation*}
The value of \eqref{eq:psi_sum} depends on position of $X_j(m,\bm{\omega})$ with respect to the origin, as well as two density parameters. Let us introduce the notation $h_j$ for the fill distance and $q_j$ for the separation radius of $X_j(m,\bm{\omega})$.  Since each lattice $X_j(m,\bm{\omega})$ is a translation of $X_j(0,\bm{0})$, the quantities $h_j$ and $q_j$ are independent of $m$ and $\bm{\omega}$, and they are defined as
\begin{align*}
h_j 
&:= 
\sup_{\bm{y} \in \mathbb{R}^d} \inf_{\bm{x}\in X_j(0,\bm{0})} \abs{\bm{y}-\bm{x}}\\
q_j 
&:= 
\frac{1}{2}\inf_{ \substack{\bm{x}, \bm{x}' \in X_j(0,\bm{0}) \\ \bm{x}\neq \bm{x}' }} \abs{\bm{x}-\bm{x}'}.
\end{align*}
Given the structure of the matrix $\mathbf{D}$, we can compute
\begin{align*}
h_j 
&= 
2\pi \abs{\det(\mathbf{D})}^{-j/d} \sup_{\bm{y} \in \mathbb{R}^d} \inf_{\bm{k}\in\mathbb{Z}^d} \abs{ \bm{y} - \bm{k} } \\
&=  
\pi \abs{\det(\mathbf{D})}^{-j/d} \sqrt{d},
\end{align*}
and likewise
\begin{equation*}
q_j 
= 
\pi \abs{\det(\mathbf{D})}^{-j/d}.
\end{equation*}

Considering the distance function
\begin{equation*}
\text{dist}(\bm{0},X_j(m,\bm{\omega})) 
:= 
\min_{\bm{x} \in X_j(m,\bm{\omega})} \abs{\bm{x}},
\end{equation*}
we bound \eqref{eq:psi_sum} by considering two cases:
\begin{enumerate}
\item $\text{dist}(\bm{0},X_j(m,\bm{\omega})) \geq q_j/2$;
\item $\text{dist}(\bm{0},X_j(m,\bm{\omega})) < q_j/2$.
\end{enumerate} 
For Case 1, all points of the lattice $X_j(m,\bm{\omega})$ lie outside of the ball of radius $\omega_0$ centered at the origin.  Therefore, \eqref{eq:psi_sum} can be reduced to
\begin{equation}\label{eq:psi_sum_reduced}
\left(    \sum_{\bm{x}_{\bm{k}} \in X_j(m,\bm{\omega})}  \abs{\widehat{L}(\bm{x}_{\bm{k}})}^{-2}      \right)^{-1}.
\end{equation}
Applying Proposition \ref{pr:lattice_lower}, we can bound \eqref{eq:psi_sum_reduced} from above by a constant multiple of $h_j^{2r}=(\pi\sqrt{d})^{2r} \abs{\det(\mathbf{D})}^{-2rj/d}$, and applying Proposition \ref{pr:lattice_upper}, we bound \eqref{eq:psi_sum_reduced} from below by a constant multiple of $q_j^{2r}=\pi^{2r}\abs{\det(\mathbf{D})}^{-2rj/d}$.  Importantly, the proportionality constants are independent of $j$.

For Case 2, we must be more careful, as one of the lattice points lies close to the origin. However, for any fixed $\bm{\omega}$, there is at most one $m$ for which $\text{dist}(\bm{0},X_j(m,\bm{\omega})) < q_j/2$. Therefore, in this case, a sufficient lower bound for \eqref{eq:psi_sum} is $0$; however, the upper bound must match the one derived in Case 1. Let us further separate Case 2 into the cases
\begin{enumerate}[2a.]
\item $\widehat{L}$ takes the value $0$ at some point of the lattice $X_j(m,\bm{\omega})$
\item $\absf{\widehat{L}^{-1}}<\infty$ for every point of the lattice $X_j(m,\bm{\omega})$
\end{enumerate}
In Case 2a, we see that \eqref{eq:psi_sum} is $0$. In Case 2b, we again reduce \eqref{eq:psi_sum} to \eqref{eq:psi_sum_reduced}, and we see that $0$ is a lower bound for \eqref{eq:psi_sum_reduced}.  For the upper bound, we apply Proposition \ref{pr:lattice_lower}, and the bound coincides with the one obtained in Case 1. 

To finish the proof, we note that Lemmas \ref{lem:eig_values}, \ref{lem:rb}, and \ref{lem:rb_w} imply that the wavelets 
\begin{equation*}
\left\{ \psi_{j+1}(\cdot-\beta)\right\}_{\beta \in \mathbf{D}^{j}\mathbb{Z}^d\backslash\mathbf{D}^{j+1}\mathbb{Z}^d }
\end{equation*}
form a Riesz basis at each level $j\leq j_0$. Furthermore, the bounds obtained here on $c(m,\bm{\omega})$ imply that the Riesz bounds are proportional to $\abs{\det(\mathbf{D})}^{(1/2-r/d)2j}$.  Therefore, the collection \eqref{eq:rb} is a Riesz basis of $L_2(\mathbb{R}^d)$. 
\end{proof}

Let us remark that in the proof of this theorem, we used the fact that the lattices corresponding to the matrix $\mathbf{D}$ scale uniformly in all directions.  This allowed us to find upper and lower Riesz bounds that are independent of $j$.  However, the Riesz bounds would depend on $j$ for general integer dilation matrices.

\section{Decorrelation of Coefficients}\label{sec:decor}

As was stated in the introduction, the primary reason for our construction is to promote a sparse wavelet representation.  Our model is based on the assumption that the wavelet coefficients of a signal $s$ are computed by the $L_2$ inner product
\begin{equation*}
\left<s,\psi_j(\cdot-\mathbf{D}^j\bm{k})\right>.
\end{equation*}
Here, we should point out that, unless the wavelets form an orthogonal basis, reconstruction will be defined in terms of a dual basis.  However, as our focus in this paper is the sparsity of the coefficients,  we are content to work with the analysis component of the approximation and leave the synthesis component for future study.

Now, considering our stochastic model, it is important to use wavelets that (nearly) decorrelate the signal within each scale, and one way to accomplish this goal is by modifying the underlying operator. 
Hence, given a spline-admissible pair, $\Lop$ and $\mathbf{D}$, we define a new spline-admissible pair, $\Lop_n$ and $\mathbf{D}$, by $\widehat{L}_{n}:=\widehat{L}^n$, and we shall see that as $n$ increases, the wavelet coefficients become decorrelated.  This result follows from the fact that the $(\Lop_n)^*\Lop_n$-spline interpolants (appropriately scaled) converge to a sinc-type function, and it is motivated by the work of Aldroubi and Unser, which shows that a large family of spline-like interpolators converge to the ideal sinc interpolator \cite{unser94}. To state this result explicitly, we denote the generalized B-splines for $\Lop_n$ by $\widehat{\varphi}_{n,j}=\widehat{\varphi}_{ j}^n $. Therefore the $(\Lop_n)^*\Lop_n$-spline interpolants are given by
\begin{align*}
\widehat{\phi}_{n,j}(\bm{\omega})
&=
\abs{\det(\mathbf{D})}^{j}\frac{\abs{\widehat{\varphi}_{n,j}(\bm{\omega})}^2}{\sum_{\bm{k} \in \mathbb{Z}^d}  \abs{\widehat{\varphi}_{n,j}(\bm{\omega}+ 2\pi (\mathbf{D}^T)^{-j} \bm{k})}^2   } \\
&= 
\abs{\det(\mathbf{D})}^{j}\frac{\abs{\widehat{\varphi}_{j}(\bm{\omega})}^{2n}}{\sum_{\bm{k} \in \mathbb{Z}^d}  \abs{\widehat{\varphi}_{j}(\bm{\omega}+ 2\pi (\mathbf{D}^T)^{-j} \bm{k})}^{2n}   },
\end{align*}
and we analogously define
\begin{equation*}
m_{n,j}(\bm{\omega})
=  
\frac{\abs{\widehat{\varphi}_{j}(\bm{\omega})}^{2n}}{\sum_{\bm{k} \in \mathbb{Z}^d}  \abs{\widehat{\varphi}_{j}(\bm{\omega}+ 2\pi (\mathbf{D}^T)^{-j} \bm{k})}^{2n}   }.
\end{equation*}
For any fundamental domain $\Omega_j$ of the lattice $2\pi(\mathbf{D}^T)^{-j}\mathbb{Z}^d$, let $\chi_{\Omega_j}$ denote the associated characteristic function. Proving decorrelation  depends on showing that the functions 
$m_{n,j}$ converge almost everywhere to some characteristic function $\chi_{\Omega_j}$. This analysis is closely related to the convergence of cardinal series as studied in \cite{deboor86}.  Our proof relies on the techniques used by Baxter to prove the convergence of the Lagrange functions associated with multiquadric functions \cite[Chapter 7]{baxter92}.  The idea is to define disjoint sets covering $\mathbb{R}^d$. Each set has a single point in any given fundamental domain, and we analyze the convergence of $m_{n,j}$ on these sets.
\begin{definition}\label{def:funddom}
Let $\Omega_j$ be a fundamental domain of $2\pi(\mathbf{D}^T)^{-j}\mathbb{Z}^d$. For each $j\in \mathbb Z$ and for each $\bm{x}\in \Omega_j$, define the set
\begin{equation*}
E_{j,\bm{x}}
:=
\left\{\bm{x}+2\pi (\mathbf{D}^T)^{-j} \bm{k}:\bm{k} \in \mathbb{Z}^d\right\}.
\end{equation*}
Since $\phi_{1,j}$ generates a Riesz basis, each set $E_{j,\bm{x}}$ has a finite number of elements $\bm{y}$ with $m_{1,j}(\bm{y})$ of maximal size. We define $F_j$ to be the set of $\bm{x}\in\Omega_j$ such that there is not a unique $\bm{y}\in E_{j,\bm{x}}$ where  $m_{1,j}$ attains a maximum; i.e., $\bm{x}$ is in the complement of $F_j$ if there exists $\bm{y}\in E_{j,\bm{x}}$ such that 
\begin{equation*}
m_{1,j}(\bm{y}) 
> 
m_{1,j}\left(\bm{y}+2\pi (\mathbf{D}^T)^{-j} \bm{k}\right) 
\end{equation*}
for all $\bm{k}\neq \bm{0}$.
\end{definition}
\begin{lemma}\label{lem:decor}
Let $\bm{x} \in \Omega_j\backslash F_j$, then for $ \bm{y}\in E_{j,\bm{x}} $  we have  $m_{n,j}(\bm{y})\rightarrow 0$  if and only if  $m_{1,j}(\bm{y})$ is not of maximal size over $E_{j,\bm{x}}$.  Furthermore, if  $m_{1,j}(\bm{y})$ is of maximal size, then $m_{n,j}(\bm{y})\rightarrow 1$.
\end{lemma}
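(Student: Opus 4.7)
The plan is to reduce Lemma \ref{lem:decor} to an elementary concentration argument for a summable family of nonnegative weights, with the upper Riesz bound supplying the majorant for a discrete dominated convergence. The starting observation is that for any $\bm{y}\in E_{j,\bm{x}}$, the denominator appearing in the definition of $m_{n,j}(\bm{y})$ equals $\sum_{\bm{y}'\in E_{j,\bm{x}}}\abs{\widehat{\varphi}_j(\bm{y}')}^{2n}$, since $\bm{y}+2\pi(\mathbf{D}^T)^{-j}\bm{k}$ merely re-enumerates $E_{j,\bm{x}}$ as $\bm{k}$ ranges over $\mathbb{Z}^d$. Consequently the denominator is constant along $E_{j,\bm{x}}$, and the points at which $m_{1,j}$ is maximal on $E_{j,\bm{x}}$ are precisely those at which $\abs{\widehat{\varphi}_j}^2$ is maximal.

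Next I would set $M:=\max_{\bm{y}'\in E_{j,\bm{x}}}\abs{\widehat{\varphi}_j(\bm{y}')}^2$, which is strictly positive by the lower Riesz bound on $\varphi_j$. Since $\bm{x}\in\Omega_j\setminus F_j$, Definition \ref{def:funddom} tells us that there is a unique point $\bm{y}^\star\in E_{j,\bm{x}}$ attaining $M$. Writing $t(\bm{y}):=\abs{\widehat{\varphi}_j(\bm{y})}^2/M\in[0,1]$, one obtains
\begin{equation*}
m_{n,j}(\bm{y}) \;=\; \frac{t(\bm{y})^n}{\sum_{\bm{y}'\in E_{j,\bm{x}}} t(\bm{y}')^n},
\end{equation*}
with $t(\bm{y}^\star)=1$ and $t(\bm{y}')<1$ for every $\bm{y}'\in E_{j,\bm{x}}\setminus\{\bm{y}^\star\}$.

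The only step requiring genuine justification is that the denominator converges to $1$ as $n\to\infty$. For this I would apply discrete dominated convergence: for every $n\geq 1$ we have $t(\bm{y}')^n\leq t(\bm{y}')$, and
\begin{equation*}
\sum_{\bm{y}'\in E_{j,\bm{x}}} t(\bm{y}') \;=\; M^{-1}\sum_{\bm{y}'\in E_{j,\bm{x}}}\abs{\widehat{\varphi}_j(\bm{y}')}^2 \;\leq\; B_j/M
\end{equation*}
by the upper Riesz bound of Definition \ref{def:spadm}, so $t$ is a summable majorant independent of $n$. Since $t(\bm{y}')^n$ tends pointwise to $1$ at $\bm{y}'=\bm{y}^\star$ and to $0$ elsewhere, the dominated convergence theorem yields $\sum_{\bm{y}'\in E_{j,\bm{x}}} t(\bm{y}')^n\to 1$. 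Combining this with the numerator analysis, $m_{n,j}(\bm{y}^\star)\to 1$, while $m_{n,j}(\bm{y})\to 0$ for every other $\bm{y}\in E_{j,\bm{x}}$; this delivers both directions of the stated equivalence as well as the companion limit.

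The main obstacle is conceptual rather than technical: one must recognize that the hypothesis $\bm{x}\in\Omega_j\setminus F_j$ (uniqueness of the maximizer) is exactly what makes the limit collapse onto a single point. Without uniqueness, two maximal points would share the mass and neither would tend to $0$, so both the \emph{if} and the \emph{only if} parts of the first assertion would fail. Beyond this point, the argument is purely algebraic together with the Riesz-bound estimate used above.
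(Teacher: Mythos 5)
Your proposal is correct and follows essentially the same route as the paper's proof: reduce maximality of $m_{1,j}$ over $E_{j,\bm{x}}$ to maximality of $\abs{\widehat{\varphi}_j}^2$ (constancy of the periodized denominator), then compare every term to the unique maximizer guaranteed by $\bm{x}\notin F_j$. The only difference is cosmetic: you make explicit the dominated-convergence step, with the upper Riesz bound supplying the summable majorant, where the paper simply asserts that the sum $B_{n,j}$ of $2n$-th powers of ratios tends to $1$.
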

\begin{proof} 
Fix $\bm{x}\in \Omega_j\backslash F_j$ and $y\in E_{j,\bm{x}}$.  Notice that the periodicity of the denominator of $m_{1,j}$ implies that $m_{1,j}(\bm{y})$ is maximal iff $\abs{\widehat{\varphi}_{1,j}(\bm{y})}$ is maximal.  

Let us first suppose $m_{1,j}(\bm{y})$ is not maximal.  If $\abs{\widehat{\varphi}_{1,j}(\bm{y})}=0$, the result is obvious.  Otherwise, there is some $\bm{k}_0 \in \mathbb{Z}^d$  and $b<1$ such that
\begin{equation*}
\abs{\widehat{\varphi}_{1,j}(\bm{y})}  
\leq  
b \abs{\widehat{\varphi}_{1,j}(\bm{y}+2\pi (\mathbf{D}^T)^{-j}\bm{k}_0)}. 
\end{equation*}
Therefore
\begin{align*}
\abs{\widehat{\varphi}_{n,j}(\bm{y})}^2  
&\leq  
b^{2n} \abs{\widehat{\varphi}_{n,j}(\bm{y}+2\pi (\mathbf{D}^T)^{-j}\bm{k}_0)}^2 \\
&\leq  
b^{2n} \sum_{\bm{k} \in \mathbb{Z}^d} \abs{\widehat{\varphi}_{n,j}(\bm{y}+2\pi(\mathbf{D}^T)^{-j} \bm{k})} ^2
\end{align*}
and the result follows.

Next, suppose  $m_{1,j}(\bm{y})$ is of maximal size. Since $\widehat{\varphi}_{1,j}$ has no periodic zeros, $\abs{\widehat{\varphi}_{1,j}(\bm{y})}\neq 0$.  Therefore
\begin{equation*}
m_{n,j}(\bm{y})
= 
\frac{1}{B_{n,j}(\bm{y})}
\end{equation*}
with 
\begin{equation*}
B_{n,j}(\bm{y})
= 
\sum_{\bm{k} \in \mathbb{Z}^d} \abs{ \frac{\widehat{\varphi}_{1,j}(\bm{y}+2\pi (\mathbf{D}^T)^{-j} \bm{k})}{\widehat{\varphi}_{1,j}(\bm{y})} }^{2n}.
\end{equation*}
Since $\abs{\widehat{\varphi}_{1,j}(\bm{y})}$ is of maximal size, all terms of the sum except one are less than $1$. In particular, $B_{n,j}$ will converge to $1$ as $n$ increases. 
\end{proof}
\begin{lemma}
 Let $j \in \mathbb Z$, and let $\Omega_j$ be a fundamental domain. If the Lebesgue measure of $F_j$ is $0$, then
\begin{equation*}
 \sum_{\bm{k} \in \mathbb{Z}^d} m_{n,j}(\cdot+2\pi (\mathbf{D}^T)^{-j}\bm{k})^2
\end{equation*}
converges to $\chi_{\Omega_j}$ in  $ L_1(\Omega_j)$ as $n\rightarrow \infty$.
\end{lemma}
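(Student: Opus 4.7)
The plan is to combine the normalization identity $\sum_{\bm{k}\in\mathbb{Z}^d} m_{n,j}(\bm{\omega}+2\pi(\mathbf{D}^T)^{-j}\bm{k}) = 1$, which is immediate from the definition of $m_{n,j}$, with the preceding lemma to obtain pointwise (a.e.)\ convergence of the sum of squares to the constant $1$ on $\Omega_j$, and then invoke the dominated convergence theorem on the bounded set $\Omega_j$ to promote this to $L_1$ convergence.

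More concretely, fix $\bm{x}\in \Omega_j\setminus F_j$ and write $a_{n,\bm{k}}:=m_{n,j}(\bm{x}+2\pi(\mathbf{D}^T)^{-j}\bm{k})$, so that $a_{n,\bm{k}}\ge 0$ and $\sum_{\bm{k}} a_{n,\bm{k}}=1$. By Lemma \ref{lem:decor}, there is a unique $\bm{k}_0=\bm{k}_0(\bm{x})\in\mathbb{Z}^d$ (corresponding to the point of $E_{j,\bm{x}}$ where $m_{1,j}$ attains its maximum) such that $a_{n,\bm{k}_0}\to 1$ and $a_{n,\bm{k}}\to 0$ for all $\bm{k}\neq \bm{k}_0$. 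The key observation is that since each $a_{n,\bm{k}}\in[0,1]$, we have $a_{n,\bm{k}}^2\le a_{n,\bm{k}}$, which yields
\begin{equation*}
\sum_{\bm{k}\neq \bm{k}_0} a_{n,\bm{k}}^2
\le
\sum_{\bm{k}\neq \bm{k}_0} a_{n,\bm{k}}
=
1-a_{n,\bm{k}_0}
\xrightarrow[n\to\infty]{} 0.
\end{equation*}
Hence $\sum_{\bm{k}} a_{n,\bm{k}}^2 = a_{n,\bm{k}_0}^2 + \sum_{\bm{k}\neq \bm{k}_0} a_{n,\bm{k}}^2 \to 1$ for every $\bm{x}\in\Omega_j\setminus F_j$. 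Since $|F_j|=0$ by assumption, the convergence holds almost everywhere on $\Omega_j$.

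For the $L_1$ conclusion, note that the same normalization gives the uniform bound
\begin{equation*}
0
\le
\sum_{\bm{k}\in\mathbb{Z}^d} m_{n,j}\bigl(\cdot+2\pi(\mathbf{D}^T)^{-j}\bm{k}\bigr)^2
\le
\sum_{\bm{k}\in\mathbb{Z}^d} m_{n,j}\bigl(\cdot+2\pi(\mathbf{D}^T)^{-j}\bm{k}\bigr)
=
1
\end{equation*}
on $\Omega_j$. Since $\Omega_j$ has finite Lebesgue measure (it is bounded, as required in Section \ref{sec:prelim}), the constant function $1$ is an integrable majorant, and the dominated convergence theorem delivers convergence to $\chi_{\Omega_j}$ in $L_1(\Omega_j)$.

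The only genuinely nontrivial step is the transition from termwise convergence of $a_{n,\bm{k}}$ to convergence of the sum of squares; a priori one must rule out the possibility that mass escapes to infinitely many small terms. The elementary inequality $a^2\le a$ on $[0,1]$, combined with the probabilistic normalization $\sum_{\bm{k}} a_{n,\bm{k}}=1$, handles this uniformity issue without any need to quantify the rate at which individual non-maximal terms vanish. I do not anticipate any obstruction beyond verifying that $F_j$ indeed has measure zero (which is the hypothesis of the lemma, so it is available here).
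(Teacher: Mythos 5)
Your proof is correct and follows essentially the same route as the paper's: pointwise a.e.\ convergence on $\Omega_j\setminus F_j$ via Lemma \ref{lem:decor}, the uniform bound by $1$ coming from the normalization $\sum_{\bm{k}} m_{n,j}(\cdot+2\pi(\mathbf{D}^T)^{-j}\bm{k})=1$, and then dominated convergence. The only difference is that you spell out the tail estimate $\sum_{\bm{k}\neq\bm{k}_0} a_{n,\bm{k}}^2\le 1-a_{n,\bm{k}_0}$ explicitly, a detail the paper leaves implicit in its squeeze between $a_{n,\bm{k}_0}^2$ and $1$.
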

\begin{proof}
The sum is bounded above by $1$, so Lemma \ref{lem:decor} implies that it converges to $\chi_{\Omega_j}$ on the complement of $F_j$. Hence, we apply the dominated convergence theorem to obtain the result.   
\end{proof}

With this theorem, we show how the wavelets corresponding to $\Lop_n$ decorrelate within scale as $n$ becomes large.  The way we characterize decorrelation is in terms of the semi-inner products
\begin{equation*}
(f,g)_{n}
:=
\int_{\mathbb{R}^d} \widehat{f} \widehat{g}^* \abs{\widehat{L}_{n}}^{-2},
\end{equation*}
which are true inner products for the wavelets
\begin{equation*}
\psi_{n,j+1}
=
\Lop_{n}^*\phi_{n,j}.
\end{equation*}
\begin{theorem}\label{th:dc}
Suppose the Lebesgue measure of $\cup_{j\in \mathbb{Z}} F_j$ is $0$, where $F_j$ is from  Definition \ref{def:funddom}. Then as $n$ increases, the wavelet coefficients decorrelate in the following sense. For any $j\in\mathbb Z, \bm{k}\in \mathbb{Z}^d\backslash \{0\}$ we have
\begin{equation*}
 (\psi_{n,j+1},\psi_{n,j+1}(\cdot- \mathbf{D}^{j}\bm{k} ))_n \rightarrow 0   
\end{equation*}
as $n \rightarrow \infty$.
\end{theorem}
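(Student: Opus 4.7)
The plan is to rewrite the semi-inner product as a Fourier integral in $m_{n,j}^{2}$, fold it over a fundamental domain of the dual lattice, and then apply the preceding lemma. Since $\widehat{\psi}_{n,j+1}=\widehat{L}_{n}^{*}\widehat{\phi}_{n,j}$, the shift property together with the Fourier-domain definition of $(\cdot,\cdot)_{n}$ gives integrand $\widehat{\psi}_{n,j+1}\,\widehat{\psi}_{n,j+1}(\cdot-\mathbf{D}^{j}\bm{k})^{*}\,|\widehat{L}_{n}|^{-2}=|\widehat{\phi}_{n,j}|^{2}\,e^{i\bm{\omega}\cdot\mathbf{D}^{j}\bm{k}}$. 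Using $|\widehat{\phi}_{n,j}|^{2}=|\det(\mathbf{D})|^{2j}\,m_{n,j}^{2}$, this produces
\begin{equation*}
\bigl(\psi_{n,j+1},\psi_{n,j+1}(\cdot-\mathbf{D}^{j}\bm{k})\bigr)_{n}
=
|\det(\mathbf{D})|^{2j}\int_{\mathbb{R}^{d}} m_{n,j}(\bm{\omega})^{2}\,e^{i\bm{\omega}\cdot\mathbf{D}^{j}\bm{k}}\,\rmd\bm{\omega}.
\end{equation*}

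Next, let $\Omega_{j}$ be a fundamental domain of $2\pi(\mathbf{D}^{T})^{-j}\mathbb{Z}^{d}$. Because $\bm{k}\in\mathbb{Z}^{d}$, the exponential $e^{i\bm{\omega}\cdot\mathbf{D}^{j}\bm{k}}$ is invariant under shifts by this lattice. Since $m_{n,j}^{2}\geq 0$ and $\sum_{\bm{l}} m_{n,j}(\cdot+2\pi(\mathbf{D}^{T})^{-j}\bm{l})\leq 1$ ensures absolute integrability on $\Omega_{j}$, Tonelli's theorem legitimizes folding the integral to
\begin{equation*}
\int_{\Omega_{j}}\biggl(\sum_{\bm{l}\in\mathbb{Z}^{d}} m_{n,j}\bigl(\bm{\omega}+2\pi(\mathbf{D}^{T})^{-j}\bm{l}\bigr)^{2}\biggr)\,e^{i\bm{\omega}\cdot\mathbf{D}^{j}\bm{k}}\,\rmd\bm{\omega}.
\end{equation*}
Under the hypothesis $|\cup_{j}F_{j}|=0$, the preceding lemma yields that the bracketed sum converges to $\chi_{\Omega_{j}}\equiv 1$ in $L_{1}(\Omega_{j})$. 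Combined with $|e^{i\bm{\omega}\cdot\mathbf{D}^{j}\bm{k}}|=1$, this forces the integral to converge to $\int_{\Omega_{j}} e^{i\bm{\omega}\cdot\mathbf{D}^{j}\bm{k}}\,\rmd\bm{\omega}$. The change of variables $\bm{\omega}=2\pi(\mathbf{D}^{T})^{-j}\bm{u}$ identifies this with a non-zero Fourier coefficient of the constant function $1$ on a fundamental domain of $\mathbb{Z}^{d}$, hence it vanishes for $\bm{k}\neq\bm{0}$, completing the proof.

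The delicate point I expect to wrestle with is the very first step: the weight $|\widehat{L}_{n}|^{-2}$ may have non-integrable singularities on the zero set of $\widehat{L}$, so the naive integrand in the semi-inner product is not immediately well-defined. The cancellation $|\widehat{\psi}_{n,j+1}|^{2}\,|\widehat{L}_{n}|^{-2}=|\widehat{\phi}_{n,j}|^{2}$ is what renders the integrand a bona fide $L_{1}$ function, and this is exactly the reason emphasized by the authors that $(\cdot,\cdot)_{n}$ is a genuine inner product when restricted to the wavelets $\psi_{n,j+1}$. The remaining ingredients — Tonelli on a non-negative sum, passing an $L_{1}$ limit through a bounded multiplier, and evaluating the Fourier coefficient of a character over a fundamental domain — are routine.
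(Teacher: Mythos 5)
Your proof is correct and follows essentially the same route as the paper's: rewrite the semi-inner product as $\abs{\det(\mathbf{D})}^{2j}\int m_{n,j}^2\, e^{\pm i\bm{\omega}\cdot\mathbf{D}^j\bm{k}}\,\rmd\bm{\omega}$, periodize over a fundamental domain of $2\pi(\mathbf{D}^T)^{-j}\mathbb{Z}^d$, and invoke the $L_1$-convergence of the periodized sum to $\chi_{\Omega_j}$ together with the vanishing of the character integral for $\bm{k}\neq\bm{0}$. If anything, your write-up is slightly more explicit than the paper's (which compresses the last two steps into a single appeal to dominated convergence), and your remark about the cancellation $\absf{\widehat{\psi}_{n,j+1}}^2\absf{\widehat{L}_n}^{-2}=\absf{\widehat{\phi}_{n,j}}^2$ making the integrand well defined is a fair observation the paper leaves implicit.
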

\begin{proof}
First, we express the inner product as an integral
\begin{align*}
 (\psi_{n,j+1},\psi_{n,j+1}(\cdot-  \mathbf{D}^{k}\bm{k}))_n 
 &= 
 \int_{\mathbb R^d} \widehat{\psi}_{n,j+1}(\bm{\omega}) (\psi_{n,j+1}(\cdot-\mathbf{D}^{j}\bm{k}))^\wedge(\bm{\omega})^*  \abs{\widehat{L}_n}^{-2} \rmd\bm{\omega}\\
 &= 
 \abs{\det(\mathbf{D})}^{2j}\int_{\mathbb R^d} m_{n,j}(\bm{\omega})^2e^{- i\mathbf{D}^{j}\bm{k}\cdot\bm{\omega}}\rmd\bm{\omega},
\end{align*}
and we periodize the integrand to get
\begin{align*}
\int_{\mathbb R^d} m_{n,j}(\bm{\omega})^2e^{- i\mathbf{D}^{j}\bm{k}\cdot\bm{\omega}}d\bm{\omega} 
&=
\sum_{\bm{l}\in 2\pi (\mathbf{D}^T)^{-j}\mathbb{Z}^d} \int_{\Omega_j +\bm{l}} m_{n,j}(\bm{\omega})^2e^{- i\mathbf{D}^{j}\bm{k}\cdot\bm{\omega}} \rmd\bm{\omega} \\
&= 
\int_{\Omega_j}  e^{- i\mathbf{D}^{j}\bm{k}\cdot\bm{\omega}}\sum_{\bm{l}\in 2\pi (\mathbf{D}^T)^{-j}\mathbb{Z}^d}  m_{n,j}(\bm{\omega}-\bm{l})^2\rmd\bm{\omega}. 
\end{align*}
The last expression converges to $0$ by the Lebesgue dominated convergence theorem. 
\end{proof}

Let us now show how this result implies decorrelation of the wavelet coefficients.  Recall that our model for a random signal $s$ is based on the stochastic differential equation $\Lop s =w$, where $w$ is a non-Gaussian white noise \cite{unser_part1} and the operator $\Lop$ is spline admissible. We denote the wavelet coefficients as
\begin{align*}
c_{n,j+1,\bm{k}} 
&:= 
\left<s,\psi_{n,j+1}(\cdot-\mathbf{D}^j\bm{k}) \right> \\
&=  
\left< w, \phi_{n,j}(\cdot-\mathbf{D}^j\bm{k}) \right>.
\end{align*}
Our stochastic model implies that the coefficients are random variables.  Hence, for distinct $\bm{k}$ and $\bm{k}'$, the covariance between $c_{n,j+1,\bm{k}}$ and $c_{n,j+1,\bm{k}'}$ is determined by the expected value of their product:
\begin{equation*}
\mathbb{E}\{ c_{n,j+1,\bm{k}} c_{n,j+1,\bm{k}'}     \} 
= 
\mathbb{E}\left\{ \left<w,\phi_{n,j}(\cdot-\mathbf{D}^j\bm{k}) \right>\left<w,\phi_{n,j}(\cdot-\mathbf{D}^j\bm{k}')\right> \right\}.
\end{equation*} 
As long as the white noise $w$ has zero mean and finite second-order moments, the covariance satisfies
\begin{align*}
\mathbb{E}\{ c_{n,j+1,\bm{k}} c_{n,j+1,\bm{k}'}     \} 
&= 
\left<\phi_{n,j}(\cdot-\mathbf{D}^j\bm{k}), \phi_{n,j}(\cdot-\mathbf{D}^j\bm{k}')  \right> \\
&=  
(\psi_{n,j+1}(\cdot- \mathbf{D}^{j}\bm{k} ),\psi_{n,j+1}(\cdot- \mathbf{D}^{j}\bm{k}' ))_n \\
&\rightarrow 
0,
\end{align*}
where the convergence follows from Theorem \ref{th:dc}.  Therefore, when the standard deviations of $c_{n,j+1,\bm{k}}$ and $c_{n,j+1,\bm{k}'}$ are bounded below, the correlation between the coefficients converges to zero.

\section{Discussion and Examples}\label{sec:conclusion}

The formulation presented in this paper is quite general and accommodates many operators.  In this section, we show how it relates to previous wavelet constructions, and we provide examples that are not covered by previous theories.

\subsection{Connection to previous constructions}\label{sec:connection}

Our operator-based wavelet construction can be viewed as a direct generalization of the cardinal spline wavelet construction.  To see this, define the B-spline $N_1$ to be the characteristic function of the interval $[0,1]\subset \mathbb{R}$. Then for $m=2,3,\dots$, let the B-splines $N_m$ be defined by
\begin{equation*}
N_m(x) 
:= 
\int_0^1 N_{m-1} (x-t) {\rm d} t.
\end{equation*}
The fundamental (cardinal) interpolatory spline $\phi_{2m}$ is then defined as the linear combination
\begin{equation}\label{eq:interp_bspline}
\phi_{2m} (x) 
:= 
\sum_{k\in\mathbb{Z}}  \alpha_{k,m} N_{2m} (x+m-k),
\end{equation}
satisfying the interpolation conditions:
\begin{equation*}
\phi_{2m}(k)
=
\delta_{k,0}, \quad k\in\mathbb{Z},
\end{equation*}
where $\delta$ denotes the Kronecker delta function.  In \cite{chui91}, the authors define the cardinal B-spline wavelets (relative to the scaling function $N_m$) as
\begin{equation*}
\psi_m(x) 
:=  
\left(\frac{{\rm d}^m}{{\rm d} x^m}\phi_{2m} \right) (2x-1)
\end{equation*}

Within the context of our construction, the operator $\Lop$ is a constant multiple of the order $m$ derivative, and $\Lop^*\Lop$ is a constant multiple of the order $2m$ derivative.  Therefore our $\Lop^*\Lop$ spline interpolants \eqref{eq:lagint} are equivalent to the $\phi_{2m}$ defined in \eqref{eq:interp_bspline}, so we obtain the same wavelet spaces. In particular,  $\psi_m$ is a constant multiple of $\Lop^* \phi_{2m}$.  

A more general construction is given in \cite{micchelli91}.  In that paper, the the authors allow for scaling functions $\varphi$ that are defined in the Fourier domain by $\widehat{\varphi} = T/q$, where $T$ is a trigonometric polynomial 
\begin{equation*}
T(\bm{\omega}) 
:= 
\sum_{\bm{k}\in \mathbb{Z}^d} c[\bm{k}] e^{-i \bm{k}\cdot  \bm{\omega}}, \quad \bm{\omega} \in \mathbb{R}^d
\end{equation*}
and $q$  is a homogeneous polynomial
\begin{equation*}
q(\bm{\omega}) 
:=
\sum_{\abs{\bm{k}}=m} q_{\bm{k}} \bm{\omega}^{\bm{k}},\quad \bm{\omega} \in \mathbb{R}^d
\end{equation*}
of degree $m$ with $m>d$. Here, $q$ is also required to be elliptic; i.e., $q$ can only be zero at the origin.  The authors then define the Lagrange function $\phi$ by
\begin{equation*}
\widehat{\phi} 
= 
\frac{\abs{\widehat{\varphi}}^2}{\sum_{\bm{k}\in\mathbb{Z}^d} \abs{\widehat{\varphi}(\cdot+2\pi \bm{k})  }^2 },
\end{equation*}
and they define an elliptic spline wavelet $\psi_0$ as
\begin{equation*}
\widehat{\psi}_0 
= 
2^{-d} q^* \widehat{\phi}(2^{-1}\cdot).
\end{equation*}
Thus, we can see that our construction is also a generalization of the elliptic spline wavelet construction,  the primary extension being that we allow for a broader class of operators.  In fact, both of the prior constructions use scaling functions associated with operators that have homogeneous symbols.  This special case has the  property that the multiresolution spaces can be generated by dilation.  
\begin{proposition}\label{pr:homogen}
Let $\Lop$ be an admissible Fourier multiplier operator whose symbol $\widehat{L}$ is positive (except at the origin), continuous, and homogeneous of order $\alpha>d/2$; i.e.,  
\begin{equation*}
\widehat{L}(a\bm{\omega}) 
= 
a^\alpha\widehat{L}(\bm{\omega}), \quad \text{for}\quad a>0.
\end{equation*}
Further assume that there is a localization operator $\Lop_{{\rm d},0}$ (of the form described in Definition \ref{def:spadm}) such that the generalized B-spline $\varphi_0$, defined by
\begin{equation*}
\widehat{\varphi}_0 (\bm{\omega})
:= 
\frac{\widehat{L}_{{\rm d},0}(\bm{\omega})}{\widehat{L}(\bm{\omega})},
\end{equation*}
satisfies the Riesz basis condition 
\begin{equation*}
0
<
A
\leq 
\sum_{\bm{k}\in\mathbb{Z}^d}\abs{\widehat\varphi_0(\bm{\omega}+2\pi \bm{k})}^2
\leq 
B 
< 
\infty,
\end{equation*}
for some $A$ and $B$ in $\mathbb{R}$. Then the pair $\Lop, \mathbf{D}=2\mathbf{I}$ is spline admissible of order $\alpha$.
\end{proposition}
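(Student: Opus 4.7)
The plan is to verify each of the four conditions of Definition \ref{def:spadm} for the pair $(\Lop,\mathbf{D}=2\mathbf{I})$ with order $r=\alpha$. Conditions 1 and 2 are immediate: $\Lop$ is admissible by hypothesis, and $2\mathbf{I} = 2\cdot\mathbf{I}$ is manifestly of the form $a\mathbf{R}$ with $a=2>1$ and $\mathbf{R}$ orthogonal.

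For Condition 3, I would use continuity and homogeneity of $\widehat{L}$. Restricting $\widehat{L}$ to the unit sphere gives a continuous, strictly positive function on a compact set, so there is a constant $c>0$ with $\widehat{L}(\bm{\xi})\geq c$ for all $\abs{\bm{\xi}}=1$. Writing $\bm{\omega}=\abs{\bm{\omega}}\bm{\xi}$ and applying the homogeneity relation $\widehat{L}(\abs{\bm{\omega}}\bm{\xi}) = \abs{\bm{\omega}}^\alpha \widehat{L}(\bm{\xi})$, I obtain $\abs{\widehat{L}(\bm{\omega})}\geq c\abs{\bm{\omega}}^\alpha$ for all $\bm{\omega}\neq\bm{0}$, from which Condition 3 follows with $r=\alpha$ and any suitable $C_{\Lop}$.

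The main work is Condition 4, but the homogeneity is tailor-made for this. I would simply define
\begin{equation*}
\widehat{L}_{\mathrm{d},j}(\bm{\omega}) := \widehat{L}_{\mathrm{d},0}(2^j\bm{\omega}).
\end{equation*}
If $\widehat{L}_{\mathrm{d},0}(\bm{\omega}) = \sum_{\bm{k}\in\mathbb{Z}^d} p_0[\bm{k}]\,e^{i\bm{\omega}\cdot\bm{k}}$ with $p_0\in\ell_1(\mathbb{Z}^d)$, then
\begin{equation*}
\widehat{L}_{\mathrm{d},j}(\bm{\omega}) = \sum_{\bm{k}\in\mathbb{Z}^d} p_0[\bm{k}]\,e^{i\bm{\omega}\cdot \mathbf{D}^j\bm{k}},
\end{equation*}
so the required series representation holds with $p_j=p_0\in\ell_1(\mathbb{Z}^d)$. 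Then, using the homogeneity identity $\widehat{L}(\bm{\omega}) = 2^{-j\alpha}\widehat{L}(2^j\bm{\omega})$, a short calculation gives
\begin{equation*}
\widehat{\varphi}_j(\bm{\omega}) = \frac{\widehat{L}_{\mathrm{d},j}(\bm{\omega})}{\widehat{L}(\bm{\omega})} = 2^{j\alpha}\,\widehat{\varphi}_0(2^j\bm{\omega}).
\end{equation*}

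Finally, I would verify the Riesz basis condition by the change of variables $\bm{\omega}' = 2^j\bm{\omega}$. Since $2\pi(\mathbf{D}^T)^{-j}\mathbb{Z}^d = 2\pi\cdot 2^{-j}\mathbb{Z}^d$, this yields
\begin{equation*}
\sum_{\bm{k}\in\mathbb{Z}^d}\abs{\widehat{\varphi}_j(\bm{\omega}+2\pi (\mathbf{D}^T)^{-j}\bm{k})}^2 = 2^{2j\alpha}\sum_{\bm{k}\in\mathbb{Z}^d}\abs{\widehat{\varphi}_0(\bm{\omega}'+2\pi\bm{k})}^2,
\end{equation*}
and the hypothesized bounds $A\leq \sum_{\bm{k}}\abs{\widehat{\varphi}_0(\bm{\omega}'+2\pi\bm{k})}^2\leq B$ transfer directly to bounds $A_j=2^{2j\alpha}A$ and $B_j=2^{2j\alpha}B$ at scale $j$. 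There is really no obstacle here beyond bookkeeping; the proof is essentially a one-line reduction once the right definition of $\widehat{L}_{\mathrm{d},j}$ is made, and the only delicate point to watch is ensuring that the dilation in $\widehat{L}_{\mathrm{d},j}$ matches the dual lattice periodicity, which is why the choice $\mathbf{D}=2\mathbf{I}$ (and more generally a scalar multiple of the identity acting compatibly with homogeneous symbols) is essential.
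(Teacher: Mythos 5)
Your proof is correct and follows essentially the same route as the paper: dilate the localization filter to define $\widehat{L}_{\mathrm{d},j}$, use homogeneity of $\widehat{L}$ to identify $\widehat{\varphi}_j$ with a dilate of $\widehat{\varphi}_0$, and transfer the Riesz bounds by reindexing the periodization sum. The only difference is that the paper inserts a factor $2^{-j\alpha}$ into $\widehat{L}_{\mathrm{d},j}$ so that $\widehat{\varphi}_j=\widehat{\varphi}_0(2^j\cdot)$ and the Riesz bounds are scale-independent ($A_j=A$, $B_j=B$), whereas your unnormalized choice gives $\widehat{\varphi}_j=2^{j\alpha}\widehat{\varphi}_0(2^j\cdot)$ and $A_j=2^{2j\alpha}A$, $B_j=2^{2j\alpha}B$; since Condition 4 of Definition \ref{def:spadm} permits $j$-dependent bounds, both choices are equally valid.
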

\begin{proof}
The first two conditions of Definition \ref{def:spadm} are automatically satisfied.  For the third condition, let $C_{\Lop}>0$ be a constant satisfying
\begin{equation*}
C_{\Lop} \abs{\widehat{L}(\bm{\omega})}^2 
\geq 
1
\end{equation*} 
on the unit sphere $\mathbb{S}^{d-1}\subset\mathbb{R}^d$. Then for $\abs{\bm{\omega}}>0$, homogeneity of $\widehat{L}$ implies 
\begin{align*}
C_{\Lop} \left(1+ \abs{\widehat{L}(\bm{\omega})}^2 \right) 
&\geq 
C_{\Lop} \abs{\widehat{L} \left( \abs{\bm{\omega}} \frac{\bm{\omega}}{\abs{\bm{\omega}}}    \right)}^2 \\
& \geq 
\abs{\bm{\omega}}^{2\alpha}.
\end{align*} 
For the fourth property, we let $\widehat{L}_{{\rm d},0}(\bm{\omega})= \sum_{\bm{k}\in \mathbb{Z}^d}p[\bm{k}]e^{i\bm{\omega}\cdot \bm{k}}$ and define
\begin{equation*}
\widehat{L}_{{\rm d},j}(\bm{\omega}) 
:= 
2^{-j\alpha}\sum_{\bm{k}\in \mathbb{Z}^d}p[\bm{k}]e^{i\bm{\omega}\cdot 2^j\bm{k}}.
\end{equation*}
The generalized B-splines $\varphi_j$ will then satisfy
\begin{align*}
\widehat{\varphi}_j(\bm{\omega}) 
&= 
\widehat{L}_{{\rm d},j}(\bm{\omega}) \widehat{L}(\bm{\omega})^{-1}\\
&= 
2^{-j\alpha}\widehat{L}_{{\rm d},0}(2^j\bm{\omega}) \widehat{L}(\bm{\omega})^{-1}\\
&= 
\widehat{L}_{{\rm d},0}(2^j\bm{\omega}) \widehat{L}(2^{j}\bm{\omega})^{-1}\\
&= 
\widehat{\varphi}_0(2^j\bm{\omega}).
\end{align*}
The Riesz basis property can then be verified, since
\begin{equation*}
\sum_{\bm{k}\in\mathbb{Z}^d}\abs{\widehat\varphi_j(\bm{\omega}+2\pi 2^{-j}\bm{k})}^2 
= 
\sum_{\bm{k}\in\mathbb{Z}^d}\abs{\widehat\varphi_0(2^{j}\bm{\omega}+2\pi \bm{k})}^2.
\end{equation*}
\end{proof}

In summary, our wavelet construction generalizes these known constructions for homogeneous Fourier multiplier operators, and it accommodates the more complex setting of non-homogeneous operators.

\subsection{Mat\'ern and Laplace operator examples} \label{sec:examples_lap}

The $d$-dimensional Mat\'ern operator is not homogeneous, so it provides an example that is not included in traditional wavelet constructions. Its symbol is $\widehat{L}_\nu(\bm{\omega})=(\abs{\bm{\omega}}^2+1)^{\nu/2}$, with the parameter $\nu > d/2$. As $\widehat{L}_\nu(\bm{\omega})^{-1}$ satisfies the Riesz basis condition, no localization operator is needed. Therefore, the operator $\Lop_\nu$ is spline-admissible of order $\nu$ for any admissible subsampling matrix $\mathbf{D}$. 
 
Next, consider the iterated Laplacian operator with symbol $\widehat{L}=\abs{\bm{\omega}}^{2m}$, where $m>d/4$ is an integer, and let $\mathbf{D}=2\mathbf{I}$.  Localization operators $\Lop_{{\rm d},j}$ can be constructed as in \cite{chui92}, and all of the conditions of Definition \ref{def:spadm} are satisfied. 
 
Note that each of these operators satisfies the growth condition of Theorem \ref{th:rb2}, so the corresponding wavelet spaces may be used to construct Riesz bases of $L_2(\mathbb{R}^d)$.

\subsection{Construction of non-standard localization operators}
\label{sec:examples_helm}

Here, we consider the Helmholtz operator $\Lop$ and construct corresponding localization operators $\Lop_{\rmd,j}$.  While we focus on this particular operator, the presented method is sufficient to be applied more generally.

The Helmholtz operator is defined by its symbol $\widehat{L}(\omega)=1/4-\abs{\bm{\omega}}^2$.    
The wavelets corresponding to $\Lop$ could potentially be applied in optics, as the Helmholtz equation,
\begin{equation*}
\Delta u + \lambda u 
=
f,
\end{equation*}
is a reduced form of the wave equation \cite[Chapter 5]{duffy01}.
In what follows, we show that this is a spline-admissible operator for the scaling matrix $\mathbf{D}=2\mathbf{I}$ on $\mathbb{R}^2$. However, since the wavelets $\psi_{j+1}=\Lop^*\phi_j$ do not form a Riesz basis for the coarse-scale wavelet spaces, we only consider $j\leq 0$.

Our construction of localization operators $\Lop_{\rmd, j}$ is based on the fact that sufficiently smooth functions have absolutely convergent Fourier series \cite[Theorem 3.2.9]{grafakos08}. This implies that we can define $\Lop_{\rmd, j}$ by constructing smooth, periodic functions $\widehat{L}_{\rmd, j}$ that are asymptotically equivalent to $\widehat{L}$ at its zero set. In fact, we define $\widehat{L}_{\rmd, j}$ to be equal to $\widehat{L}$ near its zeros.

Notice that $\widehat{L}$ is zero on the circle of radius $1/4$ centered at the origin, and it is smooth in a neighborhood of this circle.  Therefore, in the case $j=0$, we choose $\epsilon>0$ sufficiently small and define $\widehat{L}_{\rmd,0}$ to be a function satisfying:
\begin{enumerate}
\item $\widehat{L}_{\rmd,0}(\bm{\omega})=\widehat{L}(\bm{\omega})$ for $\abs{\abs{\bm{\omega}}-1/4}<\epsilon$;
\item  $\widehat{L}_{\rmd,0}(\bm{\omega})$ is constant for $\abs{\abs{\bm{\omega}}-1/4}>3\epsilon$ and $\omega \in [-\pi,\pi]^2$;
\item $\widehat{L}_{\rmd,0}(\bm{\omega})$ is periodic with respect to the lattice $2\pi \mathbb{Z}^2$.
\end{enumerate}
Such a function can be constructed using a smooth partition of unity $\{f_n\}_{n=1}^N$ on the torus, where each $f_n$ is supported on a ball of radius $\epsilon$.  Here, we require each $f_n$ to be positive, and the partition of unity condition means that  
\begin{equation*}
\sum_{n=1}^N f_n(\bm{\omega})
=
1.
\end{equation*} 
We partition the index set $\{1,\dots,N\}$ into the three subsets $\Lambda_1,\Lambda_2,\Lambda_3$ as follows:
\begin{enumerate}
\item If the support of $f_n$ has a non-empty intersection with the annulus $\abs{\abs{\bm{\omega}}-1/4}\leq \epsilon$, then $n\in \Lambda_1$;
\item Else if the support of $f_n$ lies in the ball of radius $1/4-\epsilon$ centered at the origin, then $n\in \Lambda_2$;
\item Else $n\in \Lambda_3$.
\end{enumerate}
We now define the periodic function
\begin{equation*}
\widehat{L}_{\rmd,0} 
:= 
\sum_{n\in \Lambda_1} f_n \widehat{L} + \sum_{n\in \Lambda_2} f_n  -  \sum_{n\in \Lambda_3} f_n,
\end{equation*}
on $[-\pi,\pi]^2$, and it can be verified that this function has the required properties.

Using a similar approach, we can define $\widehat{L}_{\rmd, j}$ for $j<0$, and the conditions of spline admissibility can be verified. Since the Helmholtz operator satisfies the conditions of Theorem \ref{th:rb2}, the resulting wavelet system is a Riesz basis of $L_2(\mathbb{R}^d)$. 

In conclusion, we have constructed localization operators (and hence generalized B-splines) for the Helmholtz operator. Furthermore, the presented method applies in greater generality
to operators whose symbols are smooth near their zero sets.

\appendix
\renewcommand\thesection{Appendix \Alph{section}}

\section{Discrete Sums}

Let $X=\{\bm{x}_k\}_{k\in\mathbb{N}}$ be a countable collection of points in $\mathbb{R}^d$, and define
\begin{align*}
h_X 
&:= 
\sup_{\bm{x} \in \mathbb{R}^d} \inf_{k\in \mathbb{N}} \abs{\bm{x}-\bm{x}_k} \\
q_X 
&:= 
\frac{1}{2} \inf_{k\neq k'} \abs{\bm{x}_k-\bm{x}_{k'}}.
\end{align*}
Also, let $B(\bm{x},r)$ denote the ball of radius $r$ centered at $\bm{x}$.
Proving Riesz bounds for the wavelet spaces relies on the following propositions concerning sums of function values over discrete sets.
\begin{proposition}\label{pr:lattice_lower}
If $h_X<\infty$ and $r>d/2$, then there exists a constant $C>0$ (depending only on $r$ and $d$, not $h_X$) such that  
\begin{equation*}
\sum_{\abs{\bm{x}_k} \geq 2h_X} \abs{\bm{x}_k}^{-2r} 
\geq 
C h_X^{-2r}
\end{equation*}
\end{proposition}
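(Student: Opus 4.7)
The key observation is that the sum consists of nonnegative terms, so it suffices to exhibit a \emph{single} point $\bm{x}_{k_0}\in X$ lying in an annulus of the form $\{2h_X\leq|\bm{x}|\leq c\,h_X\}$ for some absolute constant $c$; such a point alone contributes at least $(c\,h_X)^{-2r}=c^{-2r}h_X^{-2r}$ to the sum, yielding the claim.

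Assuming $h_X>0$, the plan is to produce such a point using the fill-distance hypothesis together with the triangle inequality. Pick any target $\bm{y}\in\mathbb{R}^d$ with $|\bm{y}|=4h_X$. By the definition $h_X=\sup_{\bm{y}'}\inf_k|\bm{y}'-\bm{x}_k|$, we have $\inf_k|\bm{y}-\bm{x}_k|\leq h_X$, and hence for any slack $\epsilon>0$ there is some $k_0$ with $|\bm{y}-\bm{x}_{k_0}|\leq h_X+\epsilon$. Taking $\epsilon=h_X$ gives $|\bm{y}-\bm{x}_{k_0}|\leq 2h_X$, and two applications of the triangle inequality then yield
\[
2h_X \;\leq\; |\bm{x}_{k_0}| \;\leq\; 6h_X.
\]
So $\bm{x}_{k_0}$ lies in the summation range and $|\bm{x}_{k_0}|^{-2r}\geq (6h_X)^{-2r}$. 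Retaining only this one term gives the bound with $C=6^{-2r}$, a constant depending only on $r$.

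There is essentially no serious obstacle here: the argument is a one-line triangle-inequality estimate once the test point $\bm{y}$ is chosen with the right scale. The only mildly delicate point is that the infimum in the definition of $h_X$ need not be attained, which is cleanly absorbed by taking the slack $\epsilon$ proportional to $h_X$ (here $\epsilon=h_X$) rather than zero. I also remark that the hypothesis $r>d/2$ does not enter into this lower bound at all; it is presumably invoked only in the companion Proposition~\ref{pr:lattice_upper}, which bounds the analogous sum from above and requires the extra decay for convergence.
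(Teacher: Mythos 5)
Your proof is correct, and it takes a genuinely different route from the paper's. You isolate a single point of $X$ in the annulus $2h_X\leq\abs{\bm{x}}\leq 6h_X$ (guaranteed by the fill-distance hypothesis applied to a test point at radius $4h_X$, with the non-attainment of the infimum absorbed by the slack $\epsilon=h_X$) and keep only that one term, getting the explicit constant $C=6^{-2r}$. The paper instead keeps the whole sum: it bounds each term $\abs{\bm{x}_k}^{-2r}$ below by the average of $\abs{\bm{x}}^{-2r}$ over the ball $B(\bm{x}_k,h_X)$, uses the fill-distance property to argue that these balls cover the exterior region $\{\abs{\bm{x}}\geq 3h_X\}$, and evaluates the resulting tail integral $\int_{3h_X}^\infty t^{-2r+d-1}\,\rmd t$ -- which is where the hypothesis $r>d/2$ enters, confirming your observation that this hypothesis is not actually needed for the lower bound (it is essential only for the companion upper bound, Proposition~\ref{pr:lattice_upper}). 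Your argument is more elementary and yields a cleaner constant; the paper's volume-comparison argument is the natural twin of the one used for Proposition~\ref{pr:lattice_upper} and captures the aggregate contribution of all distant points, but for the stated inequality (and for its use in Theorem~\ref{th:rb2}, where only an upper bound on the reciprocal of the sum is needed) your one-term bound is entirely sufficient. The only caveat, which you flag yourself, is the degenerate case $h_X=0$, where both sides are infinite and the claim is vacuous.
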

\begin{proof}
For $\abs{\bm{x}_k}\geq 2h_X$, we have
\begin{equation*}
\abs{\bm{x}_k -h_X \frac{\bm{x}_k}{\abs{\bm{x}_k}} } 
\geq 
2^{-1}\abs{\bm{x}_k},
\end{equation*}
which implies
\begin{equation*}
\abs{\bm{x}_k}^{-2r} 
\geq 
2^{-2r} \abs{\bm{x}_k -h_X \frac{\bm{x}_k}{\abs{\bm{x}_k}} }^{-2r}.
\end{equation*}
Then
\begin{align*}
\sum_{\abs{\bm{x}_k}\geq 2h_X} \abs{\bm{x}_k}^{-2r}
&\geq 
2^{-2r} \sum_{\abs{\bm{x}_k}\geq 2h_X} \abs{\bm{x}_k -h_X \frac{\bm{x}_k}{\abs{\bm{x}_k}} }^{-2r} \frac{\text{Vol}\left(B(\bm{x}_k,h_X)  \right)}{\text{Vol}\left(B(\bm{x}_k,h_X)  \right)}  \\
&\geq 
\frac{2^{-2r}}{\text{Vol}\left(B(\bm{0},h_X) \right)} \sum_{\abs{\bm{x}_k} \geq 2h_X} \int_{B(\bm{x}_k,h_X)} \abs{\bm{x}}^{-2r} {\rm d}\bm{x}\\
&\geq 
Ch_X^{-d} \int_{3h_X}^{\infty} t^{-2r+(d-1)}{\rm d}t \\
&\geq 
Ch_X^{-2r}
\end{align*}
\end{proof}
\begin{proposition}\label{pr:lattice_upper}
If $r>d/2$ and $\abs{\bm{x}_k} \geq q_X/2$ for all $k\in\mathbb{N}$, then there exists a constant $C>0$ (depending only on $r$ and $d$, not $q_X$) such that  
\begin{equation*}
\sum_{k\in\mathbb{N}} \abs{\bm{x}_k}^{-2r} 
\leq 
C q_X^{-2r}
\end{equation*}
\end{proposition}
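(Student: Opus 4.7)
The plan is to exploit the separation condition via a dyadic decomposition of $X$ in concentric annuli around the origin, then use the separation-driven volume bound to count the number of points in each annulus, and finally sum a geometric series whose convergence is ensured by $2r>d$.

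More precisely, I would set $X_0 := \{\bm{x}_k : q_X/2 \leq \abs{\bm{x}_k} < q_X\}$ and, for each integer $j\geq 1$,
\begin{equation*}
X_j := \left\{\bm{x}_k \in X : 2^{j-1} q_X \leq \abs{\bm{x}_k} < 2^{j} q_X\right\}.
\end{equation*}
Since $\bigcup_k X_j = X$ (using the hypothesis $\abs{\bm{x}_k}\geq q_X/2$), it suffices to bound each $\sum_{\bm{x}_k\in X_j} \abs{\bm{x}_k}^{-2r}$ and sum over $j$.

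For the counting step, I would observe that by definition of $q_X$, the balls $\{B(\bm{x}_k, q_X)\}_k$ are pairwise disjoint, each of volume $c_d q_X^d$. Moreover, for $\bm{x}_k \in X_j$ the ball $B(\bm{x}_k, q_X)$ is contained in the annulus $\{\bm{x}: \abs{\bm{x}} \leq 2^j q_X + q_X\}$, whose volume is bounded by a constant multiple of $(2^j q_X)^d$. Comparing volumes gives $\abs{X_j} \leq C_d \, 2^{jd}$, with a constant depending only on $d$. Since every $\bm{x}_k\in X_j$ satisfies $\abs{\bm{x}_k}\geq 2^{j-1}q_X$ (and $\geq q_X/2$ for $j=0$), this yields
\begin{equation*}
\sum_{\bm{x}_k\in X_j} \abs{\bm{x}_k}^{-2r} \leq C_d \, 2^{jd} \cdot (2^{j-1} q_X)^{-2r} \leq C \, 2^{j(d-2r)} q_X^{-2r}.
\end{equation*}

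Summing over $j\geq 0$ and using the hypothesis $2r>d$, the series $\sum_{j\geq 0} 2^{j(d-2r)}$ converges to a constant depending only on $r$ and $d$, which produces the claimed bound $\sum_k \abs{\bm{x}_k}^{-2r} \leq C q_X^{-2r}$.

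I do not expect any serious obstacle in this argument: the only delicate point is making the counting argument clean at the inner boundary (in particular ensuring that the case $j=0$ is handled by the assumption $\abs{\bm{x}_k}\geq q_X/2$, which is precisely why that hypothesis is included). One could equivalently run a disjoint-balls integral-comparison argument, writing $\abs{\bm{x}_k}^{-2r}$ as a volume-normalized integral of $\abs{\bm{x}}^{-2r}$ over $B(\bm{x}_k, q_X/2)$, but the dyadic counting version above avoids having to handle integrability of $\abs{\bm{x}}^{-2r}$ near the origin separately.
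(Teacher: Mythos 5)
Your argument is correct, and it takes a genuinely different route from the paper. The paper runs exactly the ``disjoint-balls integral-comparison'' you mention at the end as an alternative: it replaces each value $\abs{\bm{x}_k}^{-2r}$ by $2^{2r}\abs{\bm{x}_k+\tfrac{q_X}{4}\tfrac{\bm{x}_k}{\absf{\bm{x}_k}}}^{-2r}$, bounds that by the average of $\abs{\bm{x}}^{-2r}$ over the ball $B(\bm{x}_k,q_X/4)$, uses disjointness of these balls together with the hypothesis $\abs{\bm{x}_k}\geq q_X/2$ to dominate the sum by $\mathrm{Vol}(B(\bm{0},q_X/4))^{-1}\int_{\absf{\bm{x}}>q_X/4}\abs{\bm{x}}^{-2r}\,\rmd\bm{x}$, and evaluates the integral in polar coordinates. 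Your dyadic-annulus version instead uses the separation only through the packing bound $\abs{X_j}\leq C_d\,2^{jd}$ and reduces convergence to the geometric series $\sum_j 2^{j(d-2r)}$, which converges precisely because $2r>d$. What your version buys is that it is entirely elementary (no integrals, and the integrability of $\abs{\bm{x}}^{-2r}$ near the origin never arises) and it makes the role of the exponent condition transparent; what the paper's version buys is brevity and symmetry with the companion lower bound in Proposition~\ref{pr:lattice_lower}, which is proved by the mirror-image integral comparison. Two cosmetic slips in your write-up: $\bigcup_k X_j$ should read $\bigcup_j X_j$, and the set $\{\bm{x}:\abs{\bm{x}}\leq 2^jq_X+q_X\}$ is a ball rather than an annulus; neither affects the argument.
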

\begin{proof}
Using the fact that $\abs{\bm{x}_k} \geq q_X/2$, we can write
\begin{equation*}
\abs{\bm{x}_k +\frac{q_X}{4} \frac{\bm{x}_k}{\abs{\bm{x}_k}} } 
\leq 
2\abs{\bm{x}_k},
\end{equation*}
which implies 
\begin{equation*}
\abs{\bm{x}_k}^{-2r} 
\leq 
2^{2r} \abs{\bm{x}_k +\frac{q_X}{4} \frac{\bm{x}_k}{\abs{\bm{x}_k}}}^{-2r}.
\end{equation*}
We now have
\begin{align*}
\sum_{k\in\mathbb{N}} \abs{\bm{x}_k}^{-2r} 
&\leq 
2^{2r} \sum_{k\in\mathbb{N}} \abs{\bm{x}_k +\frac{q_X}{4} \frac{\bm{x}_k}{\abs{\bm{x}_k}}}^{-2r}\frac{\text{Vol}\left(B(\bm{x}_k,q_X/4)  \right)}{\text{Vol}\left(B(\bm{x}_k,q_X/4)  \right)} \\
&\leq 
\frac{2^{2r}}{\text{Vol}\left(B(\bm{0},q_X/4)  \right)} \sum_{k\in\mathbb{N}} \int_{B(\bm{x}_k,q_X/4)} \abs{\bm{x}}^{-2r} {\rm d}\bm{x} \\
&\leq 
\frac{2^{2r}}{\text{Vol}\left(B(\bm{0},q_X/4)  \right)} \int_{\abs{\bm{x}}>q_X/4} \abs{\bm{x}}^{-2r} {\rm d}\bm{x} \\
&\leq 
Cq_X^{-d} \int_{q_X/4}^{\infty} t^{-2r+(d-1)}{\rm d}t\\
&\leq 
C q_X^{-2r}.
\end{align*}
\end{proof}

\bibliographystyle{plain}
\bibliography{arxiv_op_wave_3}
\end{document}